\documentclass[10pt,reqno]{amsart}
\usepackage{amsmath}
\usepackage{amsthm}
\usepackage{amssymb}
\usepackage{amsfonts}
\usepackage{subcaption}
\usepackage{graphicx}
\usepackage{latexsym}
\usepackage{cite, url}
\usepackage[dvipsnames]{xcolor}
\usepackage{stmaryrd}
\usepackage{hyperref}
\usepackage{enumitem}
    \setenumerate{itemsep=5pt} 
    \setitemize{itemsep=5pt} 
    
    \RequirePackage[dvipsnames]{xcolor}

\usepackage{mathtools}
\usepackage{amsbsy}    

\usepackage{enumitem}

\usepackage{pgfplots}

\usetikzlibrary{math}
\pgfplotsset{compat=1.16}

\usepackage[font=Large]{caption}

\usepackage[font=footnotesize,labelfont=bf]{caption}

\usepackage[sc]{mathpazo}
\linespread{1.05}
\usepackage[T1]{fontenc} 
%

\graphicspath{{IMAGES/}}

\newcommand{\Z}{\mathbb{Z}}
\newcommand{\R}{\mathbb{R}}
\newcommand{\M}{\mathsf{M}}

\newcommand{\T}{\!\mathsf{T}}

\renewcommand{\vec}[1]{{\bf #1}}
\providecommand{\multi}[1]{\llbracket #1 \rrbracket}

\renewcommand{\pmod}[1]{\,(\operatorname{mod} #1)}
\renewcommand{\phi}{\varphi}

\DeclarePairedDelimiter{\floor}{\lfloor}{\rfloor}
\DeclarePairedDelimiter{\ceil}{\lceil}{\rceil}

\newcommand{\norm}[1]{\| #1 \|}
\newcommand{\inner}[1]{\< #1 \>}

\newcommand{\ZZ}{\mathcal{Z}}
\newcommand{\ZZZ}{\mathsf{Z}}

\newcommand{\twovector}[2]{\begin{bmatrix} #1 \\ #2  \end{bmatrix}}

\renewcommand\>{\rangle}
\newcommand\<{\langle}

\newcommand{\abs}[1]{\left|#1\right|}

\renewcommand{\L}{\mathcal{L}}

\newcommand{\Mean}{\mathsf{Mean}\,}
\newcommand{\Median}{\mathsf{Median}\,}
\newcommand{\Mode}{\mathsf{Mode}\,}
\newcommand{\Var}{\mathsf{Var}\,}
\newcommand{\StDev}{\mathsf{StDev}\,}

\renewcommand{\min}{\mathsf{Min}\,}

\theoremstyle{plain}
\newtheorem{theorem}[equation]{Theorem}
\newtheorem{lemma}[equation]{Lemma}

\newtheorem{proposition}[equation]{Proposition}

\theoremstyle{definition}
\newtheorem{remark}[equation]{Remark}

\newtheorem{example}[equation]{Example}

\allowdisplaybreaks
\begin{document}
\title[Factorization length distribution for affine semigroups IV]{Factorization length distribution for affine semigroups~IV:\ a geometric approach to weighted factorization lengths in three-generator numerical semigroups}

\author[S.R.~Garcia]{Stephan Ramon Garcia}
\address{Department of Mathematics and Statistics, Pomona College, 610 N. College Ave., Claremont, CA 91711} 
\email{stephan.garcia@pomona.edu}
\urladdr{\url{http://pages.pomona.edu/~sg064747}}

\author[C.~O'Neill]{Christopher O'Neill}
\address{Mathematics Department, San Diego State University, 5500 Campanile Dr., San Diego, CA 92182}
\email{cdoneill@sdsu.edu}
\urladdr{\url{https://cdoneill.sdsu.edu/}}

\author[G.~Udell]{Gabe Udell}
\address{Department of Mathematics, Cornell University, 301 Tower Rd, Ithaca, NY 14853}
\email{gru5@cornell.edu}

\thanks{Partially supported by NSF grants DMS-1800123 and DMS-2054002.}

\begin{abstract}
For numerical semigroups with three generators, we study the asymptotic behavior of weighted factorization lengths, that is, linear functionals of the coefficients in the factorizations of semigroup elements.  This work generalizes many previous results, provides more natural and intuitive proofs, and yields a completely explicit error bound.
\end{abstract}

\subjclass[2010]{20M14, 05E05}

\keywords{numerical semigroup; monoid; factorization; quasipolynomial; quasirational function; mean; median; mode}
\maketitle

\section{Introduction}
In what follows, $\Z_{\geq 0}$ and $\Z_{>0}$ denote the sets of nonnegative and positive integers, respectively.
Let
\begin{equation*}
S = \inner{n_1,n_2,n_3}= \{ x_1 n_1 + x_2 n_2 + x_3 n_3 : x_1,x_2,x_3\in \Z_{\geq 0}\}
\end{equation*}
denote a \emph{numerical semigroup} (an additive subsemigroup of $\Z_{\geq 0}$) with three \emph{generators}
$n_1,n_2,n_3 \in \Z_{>0}$ \cite{MR2549780}.  
We do not assume that the generators are given in a particular order; on rare occasions, we even let them coincide.
Although unconventional, these generous conventions eliminate the need for some special cases and permit a few interesting
and unusual applications.

A \emph{factorization} of $n \in S$ is an expression 
$n = x_1n_1 + x_2 n_2 + x_3 n_3$ in which $\vec{x} = (x_1,x_2,x_3) \in \Z_{\geq 0}^3$.
The set of all factorizations of $n$ is denoted 
\begin{equation*}
\ZZZ_S(n) = \{ \vec x \in \Z_{\geq 0}^3 : n = x_1n_1 + x_2 n_2 + x_3 n_3 \}.
\end{equation*}
A \emph{factorization functional} is a linear functional of $\vec{x}$.
For example, the \emph{length} $x_1 + x_2 + x_3$ of $\vec{x}$ is a factorization functional.  
Other examples are $x_1$ and $x_1 - 2x_2 + 3x_3$.   
Values of factorization functionals are 
\emph{weighted} factorization lengths.  Combinatorial descriptions of maximum and minimum weighted factorization lengths are obtained in \cite{MR4142067}; the present paper pushes this work in new directions.

In this paper, which complements the previous papers in the series~\cite{GOY, Modular, GOOY}, we answer many questions about the asymptotic behavior
of weighted factorization lengths for three-generator numerical semigroups.
We recover and extend~\cite{GOY}, in which the
asymptotic behavior of the mean, median, and mode of (unweighted) factorization lengths are described.
Our Theorem~\ref{Theorem:Main} is more general and 
more precise than the main results of \cite{GOY}, and its proof is shorter and more transparent.

The paper \cite{GOOY}, which subsumes \cite{GOY}, treats numerical semigroups with arbitrarily many generators.
However, the approach uses tools outside the mainstream of numerical semigroup theory, such as 
algebraic combinatorics, harmonic analysis, measure theory, and functional analysis.  
In contrast, our results here are geometric and transparent, only invoking
analysis (of an elementary sort) at the final stage.  The results of this paper,
although presented only for three-generator numerical semigroups (but for general weighted factorization lengths),
may provide a clearer path to the results of \cite{GOOY} and their generalizations via polyhedral geometry techniques.

As convenience dictates, we denote (column) vectors in boldface, or as ordered pairs or triples. 
A superscript $\T$ denotes the transpose.  We let $|X|$ denote the cardinality of a set or multiset $X$.
Here is our main result.

\begin{theorem}\label{Theorem:Main}
Let $n_1,n_2,n_3 \in \Z_{> 0}$ be distinct with $\gcd(n_1,n_2,n_3)=1$; let $m_1,m_2,m_3 \in \Z$ be such that
\begin{equation*}
 \frac{m_3}{n_3} \,\,\leq\,\, \frac{m_2}{n_2} \,\,\leq\,\, \frac{m_1}{n_1},
\end{equation*}
with at least one inequality strict; let
$S = \inner{n_1,n_2,n_3}$; and let
\begin{equation*}
\lambda(\vec{x}) = m_1 x_1 + m_2 x_2 + m_3 x_3
\end{equation*}
for $\vec{x} \in \Z^3$.  Define the multiset (set with multiplicities)
\begin{equation*}
\Lambda\multi{n} = \{\!\!\{ \lambda(\vec{x}) : \vec{x} \in \ZZZ_S(n) \}\!\!\}.
\end{equation*}
Then for $\alpha < \beta$ and $n \in \Z_{\geq 0}$,
\begin{align*}
&\left|\frac{ \big| \Lambda \multi{n} \cap  [\alpha n ,\beta n] \big| }{n^2/(2n_1n_2n_3)} -\int_{\alpha}^\beta  F(x)\,dx \right|  \\
&\qquad\qquad \leq \frac{2n_1n_2n_3}{n} \left[ \frac{5d }{n_2}+\frac{2d}{n} + \left(\beta - \alpha+\frac{2d}{n}\right)\left(1+ d\operatorname{max}\left\{ n_1,n_3 \right\} \right) \right],
\end{align*}
in which
\begin{equation*}
F(t) = \frac{2 n_1 n_2 n_3}{m_1 n_3 - m_3 n_1}
\begin{cases}
0 & \text{if $t < \frac{m_3}{n_3}$},\\[3pt]
\dfrac{t n_3 - m_3 }{ m_2 n_3 - m_3 n_2,} \qquad& \text{if $\frac{m_3}{n_3} \leq t < \frac{m_2}{n_2}$},\\[10pt]
 \dfrac{m_1 - n_1 t }{ m_1 n_2 - m_2 n_1 } & \text{if $\frac{m_2}{n_2} \leq t < \frac{m_1}{n_1}$},\\[5pt]
0 & \text{if $t \geq \frac{m_1}{n_1}$},
\end{cases}
\end{equation*}
is a (possibly degenerate) triangular probability density function, and 
\begin{equation*}
d=\gcd(m_2n_3-m_3n_2, \, m_1n_3-m_3n_1, \, m_1n_2-m_2n_1). 
\end{equation*}
\end{theorem}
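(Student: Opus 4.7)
The plan is to project $\ZZZ_S(n)$ to the plane to obtain a lattice-point problem, identify $F$ by slicing a planar triangle, and control the discretization error through the quantization step $d$.

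\emph{Reduction to a planar lattice count.} Substituting $x_2 = (n - n_1 x_1 - n_3 x_3)/n_2$ yields a bijection between $\ZZZ_S(n)$ and the set of $(x_1, x_3) \in \Z_{\geq 0}^2$ satisfying $n_1 x_1 + n_3 x_3 \leq n$ together with the congruence $n_1 x_1 + n_3 x_3 \equiv n \pmod{n_2}$. Under this bijection,
\begin{equation*}
\lambda(\vec{x}) = \tilde\lambda(x_1, x_3) := \frac{m_2 n}{n_2} + \frac{m_1 n_2 - m_2 n_1}{n_2}\,x_1 - \frac{m_2 n_3 - m_3 n_2}{n_2}\,x_3,
\end{equation*}
and the two displayed fractional coefficients are both nonnegative by the ordering hypothesis. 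Writing $T_n := \{(x_1, x_3) \geq 0 : n_1 x_1 + n_3 x_3 \leq n\}$ and $L_n := \{(x_1, x_3) \in \Z^2 : n_1 x_1 + n_3 x_3 \equiv n \pmod{n_2}\}$, the target count $|\Lambda\multi{n} \cap [\alpha n, \beta n]|$ equals $|L_n \cap R_n|$, where $R_n := T_n \cap \{\alpha n \leq \tilde\lambda \leq \beta n\}$ is a convex polygon with at most five sides.

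\emph{Geometric density $F$.} Because $\gcd(n_1, n_2, n_3) = 1$, the residue map $(x_1, x_3) \mapsto n_1 x_1 + n_3 x_3 \pmod{n_2}$ is surjective; hence $L_n$ is a coset of a sublattice of $\Z^2$ of index exactly $n_2$, and the expected count is $\operatorname{Area}(R_n)/n_2$. In the scaled triangle $n^{-1}T_n$, whose vertices $(1/n_1, 0), (0,0), (0, 1/n_3)$ carry $\tilde\lambda/n$-values $m_1/n_1, m_2/n_2, m_3/n_3$, the ordering hypothesis puts the intermediate value at the origin. Consequently $\{\tilde\lambda/n \leq t\}$ cuts off a small triangle near $(0, 1/n_3)$ that grows quadratically on $[m_3/n_3, m_2/n_2]$, then saturates quadratically to the full triangle on $[m_2/n_2, m_1/n_1]$. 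Differentiating these two quadratic area functions by similar triangles and dividing by $\operatorname{Area}(n^{-1}T_n) = 1/(2 n_1 n_3)$ produces precisely the two piecewise expressions for $F(t)$ in the statement (continuous at $m_2/n_2$ and integrating to $1$, as a direct computation verifies).

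\emph{Error analysis, role of $d$, and main obstacle.} Tiling $R_n$ by translates of a fundamental parallelogram of the sublattice underlying $L_n$ gives $|L_n \cap R_n| = \operatorname{Area}(R_n)/n_2 + E$ with $|E|$ bounded by the number of cells meeting $\partial R_n$. The integer $d$ enters because the restriction of $\lambda$ to the rank-two translation lattice $\{\Delta \in \Z^3 : n_1 \Delta_1 + n_2 \Delta_2 + n_3 \Delta_3 = 0\}$ has image exactly $d\Z$: the three minors $m_i n_j - m_j n_i$ appearing in $d$ are precisely the $\lambda$-values of the natural generators $(n_2, -n_1, 0), (0, n_3, -n_2), (n_3, 0, -n_1)$. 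Hence $\tilde\lambda|_{L_n}$ is quantized in steps of $d$, so the slanted sides $\tilde\lambda = \alpha n, \tilde\lambda = \beta n$ can be shifted by at most $d$ to land on achievable values (producing the $2d/n$ corrections), while the lattice width of $L_n$ along directions transverse to these level sets scales like $d\max\{n_1, n_3\}$; combined with the $O(n/n_2)$ perimeter contribution from the axis-aligned sides of $T_n$ this assembles the stated bound. The \textbf{main obstacle} is securing the constants \emph{exactly} as written: this is bookkeeping that requires tracking each edge of $R_n$ separately as its shape degenerates (triangle, trapezoid, or pentagon, depending on how $[\alpha n, \beta n]$ sits relative to $m_2 n/n_2$) and choosing an explicit short basis of the sublattice underlying $L_n$ (via a B\'ezout relation between $\gcd(n_1, n_3)$ and $n_2$) so that every geometric constant is absorbed into the bracketed terms of the theorem.
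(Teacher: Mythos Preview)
Your projection to the $(x_1,x_3)$-plane and identification of $F$ via slicing the scaled triangle are correct and match the geometry underlying the paper, but your organization of the error analysis differs from the paper's in a way that matters for the constants.

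The paper does \emph{not} do a single two-dimensional lattice count in the polygon $R_n$. Instead it slices by the level sets $\lambda=m$: for each admissible $m$ (those with $m\equiv c\pmod d$, which is the paper's Lemma on when $\widetilde{\ZZ}(m,n)\neq\varnothing$, equivalent to your observation that $\lambda$ is quantized in steps of $d$), the fibre $\ZZ(m,n)$ is a one-dimensional lattice segment whose cardinality differs from $d\ell(m,n)/\|\vec r\|$ by at most $1$. The proof then sums these one-dimensional counts over $m$ and compares the sum to $\int_\alpha^\beta F$ via an explicit Riemann-sum lemma (their Lemma~\ref{Lemma:Convergence}) whose only inputs are the sup-norm and Lipschitz constant of the triangle function $\ell(t,1)$. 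Every constant in the final bound comes directly from these two numbers, which is why the bookkeeping closes.

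Your tiling-by-fundamental-parallelogram bound $|E|\le\#\{\text{cells meeting }\partial R_n\}$ is correct in principle, but the constant it produces depends on the \emph{shape} of the chosen fundamental domain for the index-$n_2$ sublattice, not just on its covolume. Choosing a short basis via B\'ezout as you suggest controls this, but the resulting constants involve $\gcd(n_1,n_3)$ and its cofactors rather than the clean $n_2$, $\max\{n_1,n_3\}$ appearing in the theorem; matching the stated bound this way would require an additional argument (or would produce a different, though still explicit, bound). The paper's choice to slice along the $\lambda$-direction is therefore not just bookkeeping: it is the structural move that replaces a two-dimensional boundary estimate, with its basis-dependent constants, by a sum of one-dimensional estimates each carrying error exactly $1$. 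If you reorganize your count the same way---group the points of $L_n\cap R_n$ by their $\tilde\lambda$-value and count each line separately---you recover the paper's argument in your planar coordinates.
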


The rate of convergence to the triangular density is explicit,
a huge improvement over \cite{GOY}.
Moreover, the error estimate can be improved at the expense of introducing a more complicated, 
but still explicit, expression; see Remark~\ref{Remark:Error}.
Modifications of our Lemma \ref{Lemma:Convergence} below should also permit us to recover the modular results of \cite{Modular} in the three-generator setting,
with the added bonus of explicit bounds on the rate of convergence in \cite[Thm.~3a]{Modular}.  

The motivation for Theorem \ref{Theorem:Main} stems from its centrality to the study of numerical semigroups.
Non-unique factorization has long been studied in commutative algebra, both for more general families of semigroups \cite{schmid2009characterization,geroldinger2006non,chapman2000half,nonuniq,
abhyankar1967local,chapman2014delta} and for numerical semigroups specifically \cite{numericalfactorsurvey,nselasticities,barucci1997maximality,numericalrealization}.  
The study of length sets (as opposed to multisets) is well-established territory
\cite{GH92,structurethm,narkiewiczconjecture,gao2000systems,setsoflength} and similar questions
have been studied in both number-theoretic~\cite{integervaluedpolys,factoralgebraicintegers,acmfirst}
and algebraic~\cite{modulessurvey,noncommutativefactor,setsoflengthmonthly} contexts.
Our explicit asymptotic theorem on weighted factorization lengths and multisets breaks 
new ground in the three-generator setting.

This paper is structured as follows.
We first consider examples and applications in Section~\ref{Section:Examples},
after which we move into the proof of Theorem~\ref{Theorem:Main} in Section~\ref{Section:Proof}.


\section{Examples and Applications}\label{Section:Examples}

Throughout this section we consider pairs of vectors $\vec{m}=(m_1,m_2,m_3) \in \Z^3$ 
and $\vec{n}=(n_1,n_2,n_3)\in \Z_{>0}^3$ which satisfy the conditions of Theorem \ref{Theorem:Main}. 
In each such context we define $S=\langle n_1, n_2, n_3 \rangle$,
\begin{equation*}
\lambda(\vec{x})=m_1x_1+m_2x_2 +m_3x_3,
\end{equation*}
and 
\begin{equation*}
\Lambda\multi{n} = \{\!\!\{ \lambda(\vec{x}) : \vec{x} \in \ZZZ_S(n) \}\!\!\}
\end{equation*} 
as in the statement of Theorem \ref{Theorem:Main}.
We also define 
\begin{equation}\label{eq:ZZb}
    \ZZ(m, n) = \{ \vec{x} \in \ZZZ_S(n) : \lambda(\vec{x})=m\}.
\end{equation}

Our first application of Theorem \ref{Theorem:Main} is to swiftly obtain general 
weighted versions of the main results of \cite{GOY}, in 
which the asymptotic mean, median, and mode (unweighted) factorization lengths are computed
for three-generator numerical semigroups.
In what follows, $f \sim g$ means that $\lim_{n\to\infty} f(n)/g(n) = 1$.

\begin{example}
Let $S = \inner{n_1,n_2,n_3}$, in which $\gcd(n_1,n_2,n_3)=1$. 
Apply Theorem \ref{Theorem:Main} with $\alpha = \frac{m_3}{n_3}$ and $\beta = \frac{m_1}{n_1}$ and obtain
\cite[Thm.~3.9]{factorhilbert}:
\begin{equation}\label{eq:ZZZOn}
| \Lambda(n) | = | \ZZZ_S(n)| \,\,\sim\,\, \frac{n^2}{2n_1n_2n_3} .
\end{equation}
For $\alpha < \beta$, Theorem \ref{Theorem:Main} and \eqref{eq:ZZZOn} ensure that
\begin{equation*}
\frac{ \big| \Lambda\multi{n} \cap  [\alpha n ,\beta n]  \big| }{| \Lambda\multi{n}| }
\,\,\sim\,\, \int_{\alpha}^\beta  F(x)\,dx
\end{equation*}
as $n \to \infty$.  Since the support of $F$ is $[\frac{m_3}{n_3}, \frac{m_1}{n_1}]$ and its
peak is at $\frac{m_2}{n_2}$, we have
\begin{equation*}
\min \Lambda\multi{n}  \,\, \sim \,\, \frac{m_3}{n_3} n,\qquad 
\Mode \Lambda\multi{n}  \,\, \sim \,\, \frac{m_2}{n_2} n,
\qquad \text{and} \qquad
\operatorname{max} \Lambda\multi{n}  \,\, \sim \,\, \frac{m_1}{n_1} n.
\end{equation*}
Symbolic integration and computer algebra reveals the unique
$\gamma \in [\frac{m_3}{n_3}, \frac{m_1}{n_1}]$ such that $\int_{-\infty}^{\gamma} F(t)\,dt = \frac{1}{2}$.
This yields the asymptotic median:
\begin{equation*}
\Median \Lambda\multi{n} \,\,\sim\,\, 
n \cdot \small
\begin{cases}\displaystyle
\frac{m_3}{n_3} + \sqrt{ \frac{1}{2} \left( \frac{m_1}{n_1} - \frac{m_3}{n_3}\right)\left( \frac{m_2}{n_2} - \frac{m_3}{n_3} \right)}
&\displaystyle \text{if $\frac{m_2}{n_2}  \geq \frac{1}{2} \left( \frac{m_1}{n_1} + \frac{m_3}{n_3} \right)$},\\[10pt]
\displaystyle\frac{m_1}{n_1} - \sqrt{ \frac{1}{2} \left( \frac{m_1}{n_1} - \frac{m_3}{n_3}\right)\left( \frac{m_1}{n_1} - \frac{m_2}{n_2} \right)}
&\displaystyle \text{if $\frac{m_2}{n_2} < \frac{1}{2} \left( \frac{m_1}{n_1} + \frac{m_3}{n_3} \right)$}.
\end{cases}
\end{equation*}

Consider the absolutely continuous probability measure $\nu$ defined by
\begin{equation*}
\nu( [\alpha,\beta] ) = \int_{\alpha}^{\beta} F(x)\,dx
\end{equation*}
for $\alpha < \beta$.
Define the singular probability measures
\begin{equation*}
\nu_n = \frac{1}{| \ZZZ_S(n)|} \sum_{ \vec{x} \in \ZZZ_S(n) } \delta_{ \frac{\mu(\vec{x})}{n}} ,
\end{equation*}
in which $\delta_x$ is the unit point measure at $x \in \R$.
Use \eqref{eq:ZZZOn} to deduce that
\begin{equation*}
\lim_{n\to\infty} \nu_n( [\alpha,\beta])
= \lim_{n\to\infty}\frac{\big| \Lambda(n) \cap  [\alpha n ,\beta n]  \big|  }{ | \ZZZ_S(n)| }
= \int_{\alpha}^\beta  F(x)\,dx = \nu( [ \alpha, \beta]).
\end{equation*}
If $g:\R\to\R$ is bounded and continuous, then \cite[Thm.~25.8]{Billingsley} ensures that
\begin{equation*}
\lim_{n\to\infty} 
 \frac{1}{| \Lambda(n)|} \sum_{ \vec{x} \in \ZZZ_S(n) } g\bigg( \frac{\lambda(\vec{x})}{n} \bigg) 
= \lim_{n\to\infty} \int_{\R} g \,d\nu_n
=  \int_{\R}  g(x) F(x)\,dx.
\end{equation*}
The integral on the right-hand side 
can be evaluated explicitly for $g(x) = x$ and $g(x)=x^2$.  From here one obtains
the asymptotic mean and variance of $\Lambda\multi{n}$:
\begin{align*}
\Mean \Lambda\multi{n}&\,\,\sim\,\, \frac{n}{3}\left( \frac{m_1}{n_1} + \frac{m_2}{n_2} + \frac{n_3}{n_3} \right)  ,\\[5pt]
\Var \Lambda\multi{n} &\,\,\sim\,\, \frac{n^2}{18}\left( \frac{m_1^2}{n_1^2} + \frac{m_2^2}{n_2^2} + \frac{m_3^2}{n_3^2} 
- \frac{m_1 m_2}{n_1 n_2} - \frac{m_2 m_3 }{n_2 n_3}- \frac{m_3 m_1}{n_3 n_1} \right).
\end{align*}
Asymptotic formulas for the higher moments, skewness, harmonic and geometric means,  follow in a
similar manner; see \cite[Sec.~2.1]{GOOY} for definitions.
For $m_1 = m_2 = m_3 = 1$, we obtain the asymptotic formulas
for factorization-length statistics obtained in \cite{GOY}.  Thus, Theorem \ref{Theorem:Main} recaptures
the results of \cite{GOY}, generalizes them to the weighted setting, and provides explicit error bounds in some instances.
\end{example}

\begin{example}
In \cite[Tab.~1, Fig.~2]{GOOY}, a special case of Theorem \ref{Theorem:Main} was illustrated for factorization lengths in 
the McNugget semigroup $S=\langle 6,9,20\rangle$.
Here we explore a different weighted factorization length on $S$.
Table \ref{Table:statstable} gives the actual and predicted values of several statistics pertaining to $\Lambda\multi{n}$ for $\vec{m}=(4,7,2)$, $\vec{n}=(9,20,6)$, and $n=10^5$.
The components of $\vec{m}$ and $\vec{n}$ are ordered to comply with Theorem \ref{Theorem:Main}; in particular $4/9>7/20>2/6$.  
If one charges $\$2$ for a box of $6$ McNuggets, $\$4$ for $9$ McNuggets, and $\$7$ for $20$ McNuggets, then
$\Lambda\multi{n}$ is the multiset of prices corresponding to all the ways to fill an order of $n$ McNuggets. 
\end{example}

\begin{table}[h]
\begin{equation*}
\begin{array}{c|cc||c|cc}
\text{Statistic} & \text{Actual} & \text{Predicted} & \text{Statistic} & \text{Actual} & \text{Predicted} \\
\hline
\Mean \Lambda\multi{10^5} & 37591.84  & 37592.59 & \Mode\Lambda\multi{10^5} & 35000 & 35000  \\[3pt]
\Median \Lambda\multi{10^5} & 37200  & 37200.89  & \StDev\Lambda\multi{10^5} & 2446.32 &  2446.27 \\[3pt]
\min \Lambda\multi{10^5} & 33334 & 33333.33
& \operatorname{max} \Lambda \multi{10^5} & 44440 & 44444.44
\end{array}
\end{equation*}
\caption{Actual versus predicted statistics (rounded to two decimal places) for $\Lambda\multi{10^5}$ with $\vec{n}=(9,20,6)$ and $\vec{m}=(4,7,2)$}
\label{Table:statstable}
\end{table}


The next example illustrates another use of Theorem \ref{Theorem:Main}.

\begin{example}
Let $S=\langle 6,9,20\rangle$ as in the previous example.
We now let $\vec{n}=(1,1,1)$ and $\vec{m}=(20,9,6)$. 
Then $\big|\Lambda\multi{n}\cap [\alpha n,\beta n]\big|$ is the number of possible orders of $n$ boxes of McNuggets that contain between $\alpha n$ and $\beta n$ McNuggets. 
For example, when $n=100$, $\alpha=8$, and $\beta=15$, we have 
$\big|\Lambda[\![100]\!]\cap [800, 1500] \big|=3785$;
that is, there are $3785$ ways to order between $800$ and $1500$ McNuggets using
$100$ boxes. 
Table \ref{Table:errortab} illustrates predictions and error bounds afforded by Theorem \ref{Theorem:Main}
and \eqref{eq:Error}.
\end{example}

\begin{table}[h]\footnotesize
\begin{equation*}
\begin{array}{ccc|ccccc}
n &\alpha & \beta & \frac{|\Lambda[\![n]\!]\cap [\alpha n, \beta n]|}{n^2/2} &\int_{\alpha}^{\beta}F(x)\,dx & \text{Error} & \text{Theorem \ref{Theorem:Main} bound} & \text{Eq.~\ref{eq:Error} bound} \\
\hline
100 & 8 & 15 & 0.757     & 0.742424 & 0.014576& 0.3812 & 0.151286 \\
1000 & 8 & 15 & 0.743884 & 0.742424 & 0.001460 & 0.038012 & 0.015056\\
10000 & 8 & 15 & 0.742570 & 0.742424 & 0.000146 & 0.003800 & 0.001505\\
100 & 7 & 7.1 & 0.0058 & 0.005 & 0.0008 & 0.1052 & 0.01\\
1000 & 7 & 7.1 & 0.00509 & 0.005 & 0.00009 & 0.010412 & 0.000927\\
10000 & 7 & 7.1 & 0.005009 & 0.005 & 0.000009 & 0.001040 & 0.000092
\end{array}
\end{equation*}
\caption{Error analysis (rounded to $6$ decimal places) 
for $\vec{m}=(20,9,6)$ and $\vec{n}=(1,1,1)$.}
\label{Table:errortab}
\end{table}

In the following examples, we plot $\frac{|\ZZ(m,n)|}{dn/(2n_1n_2n_3)}$ versus $\frac{m}{n}$ (in blue) overlaid 
with $F(x)$ versus $x$ (in red). 
These make sense to plot together because Lemma~\ref{Lemma:LineCount} and equation \ref{eq:Fell} below 
imply that $\frac{|\ZZ(m,n)|}{dn/(2n_1n_2n_3)}$ is within $\frac{2n_1n_2n_3}{dn}$ of $F(\frac{m}{n})$. 
Since $|\ZZ(m,n)|$ gives the multiplicity of $m$ in $\Lambda\multi{n}$, we refer to this sort of plot as 
the \emph{scaled histogram} of $\Lambda\multi{n}$.
These plots illustrate the convergence of the distribution of $\Lambda\multi{n}$ to $F(x)$.

\begin{example}
Figure \ref{Figure:revnug} gives the scaled histograms of $\Lambda\multi{100}$ and $\Lambda\multi{1000}$ 
for $\vec{m}=(20, 9, 6)$ and $\vec{n}=(1, 1, 1)$. 
\end{example}

\begin{figure}[h]
\centering
		\begin{subfigure}[b]{0.475\textwidth}
	                \centering
	                \includegraphics[width=\textwidth]{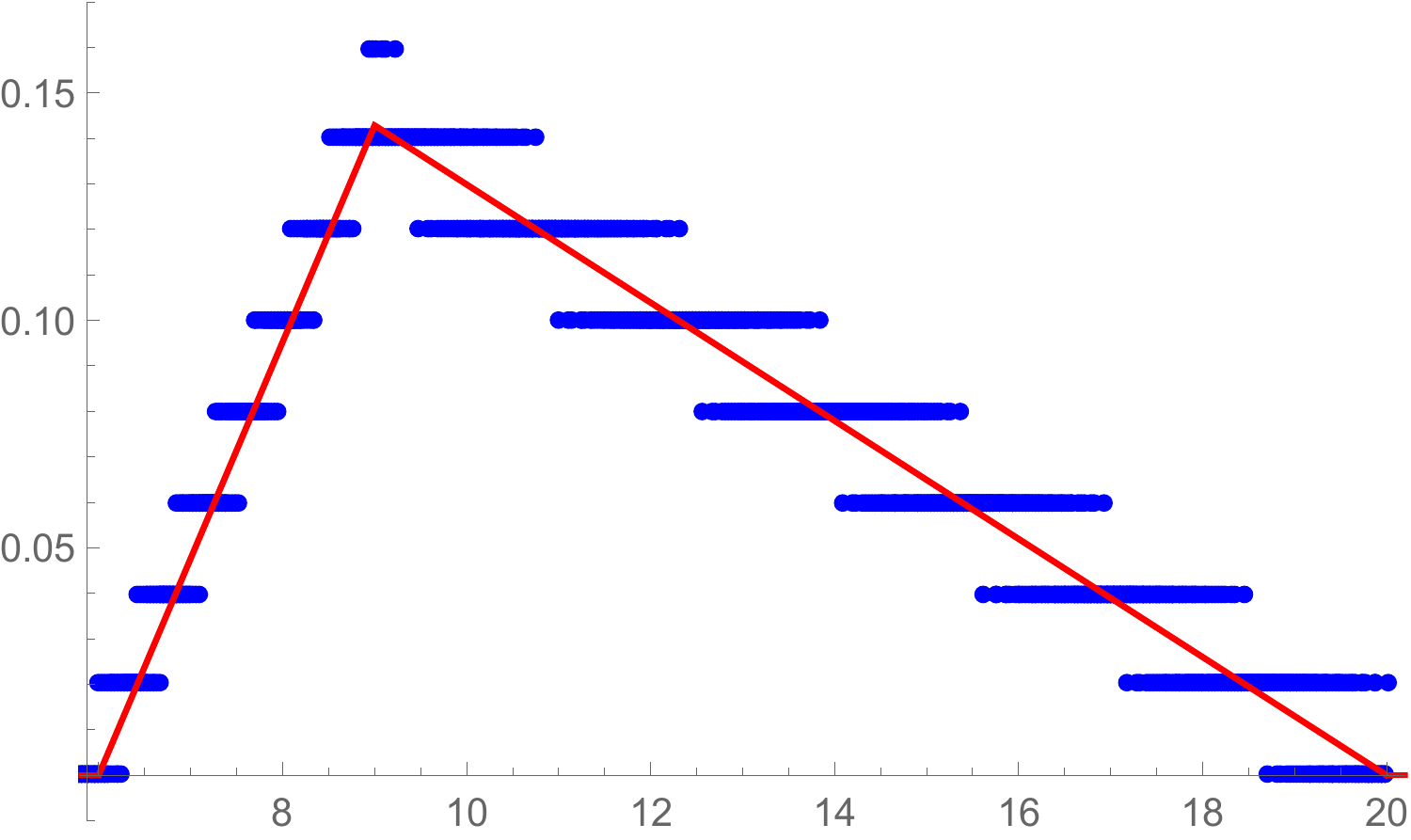}
	                \caption{$n=100$}
	        \end{subfigure}
	        \quad
		\begin{subfigure}[b]{0.475\textwidth}
	                \centering
	                \includegraphics[width=\textwidth]{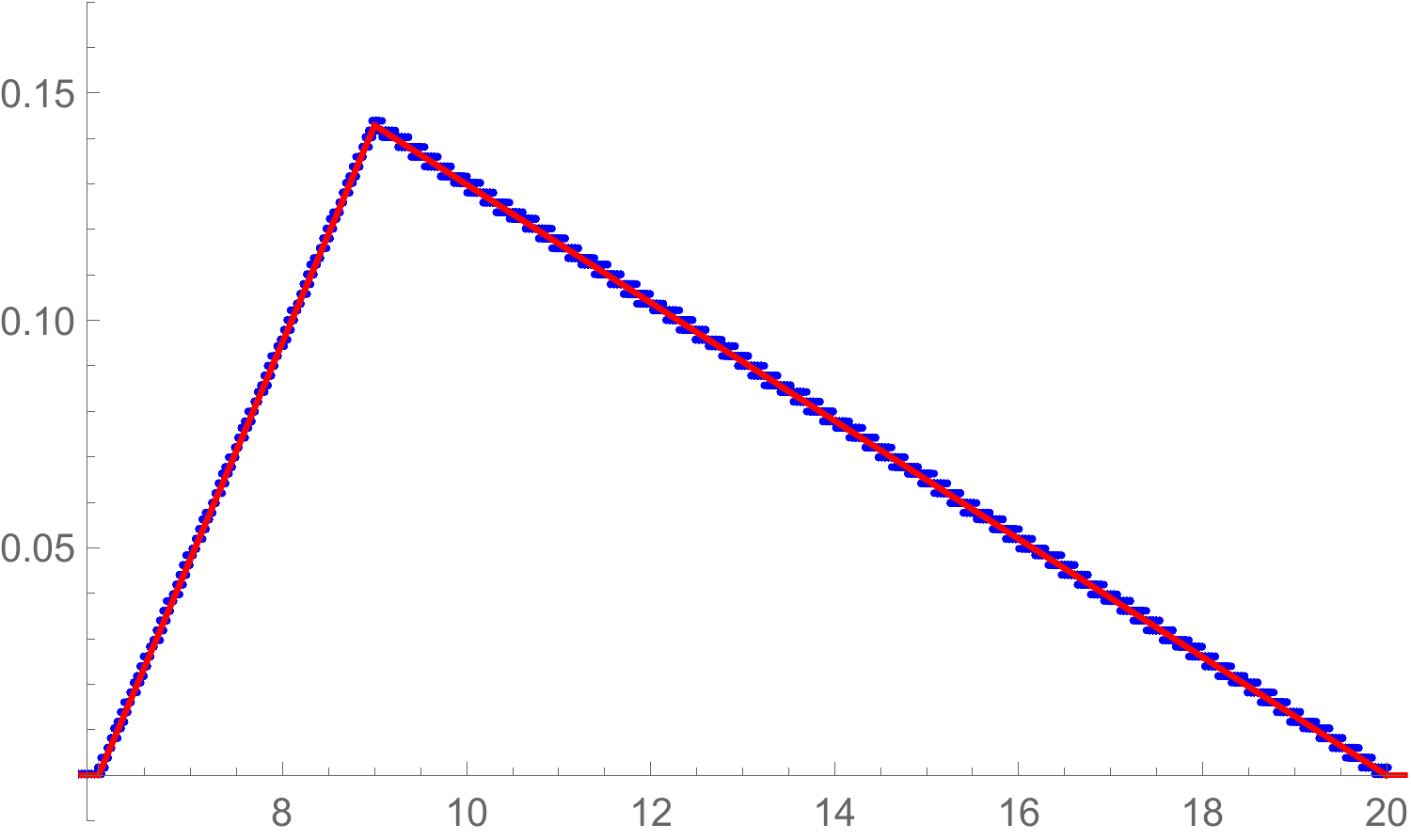}
	                \caption{$n=50000$}
	        \end{subfigure}

\caption{Scaled histograms of $\Lambda\multi{n}$ with $\vec{n}=(1,1,1)$ and $\vec{m}=(20,9,6)$.}
\label{Figure:revnug}
\end{figure}

\begin{example}
Theorem \ref{Theorem:Main} does not require $m_1,m_2,m_3$ to be positive. 
Figure \ref{Figure:negex} demonstrates the theorem when $m_1<0$.
\end{example}

\begin{figure}[h]
\centering
		\begin{subfigure}[b]{0.475\textwidth}
	                \centering
	                \includegraphics[width=\textwidth]{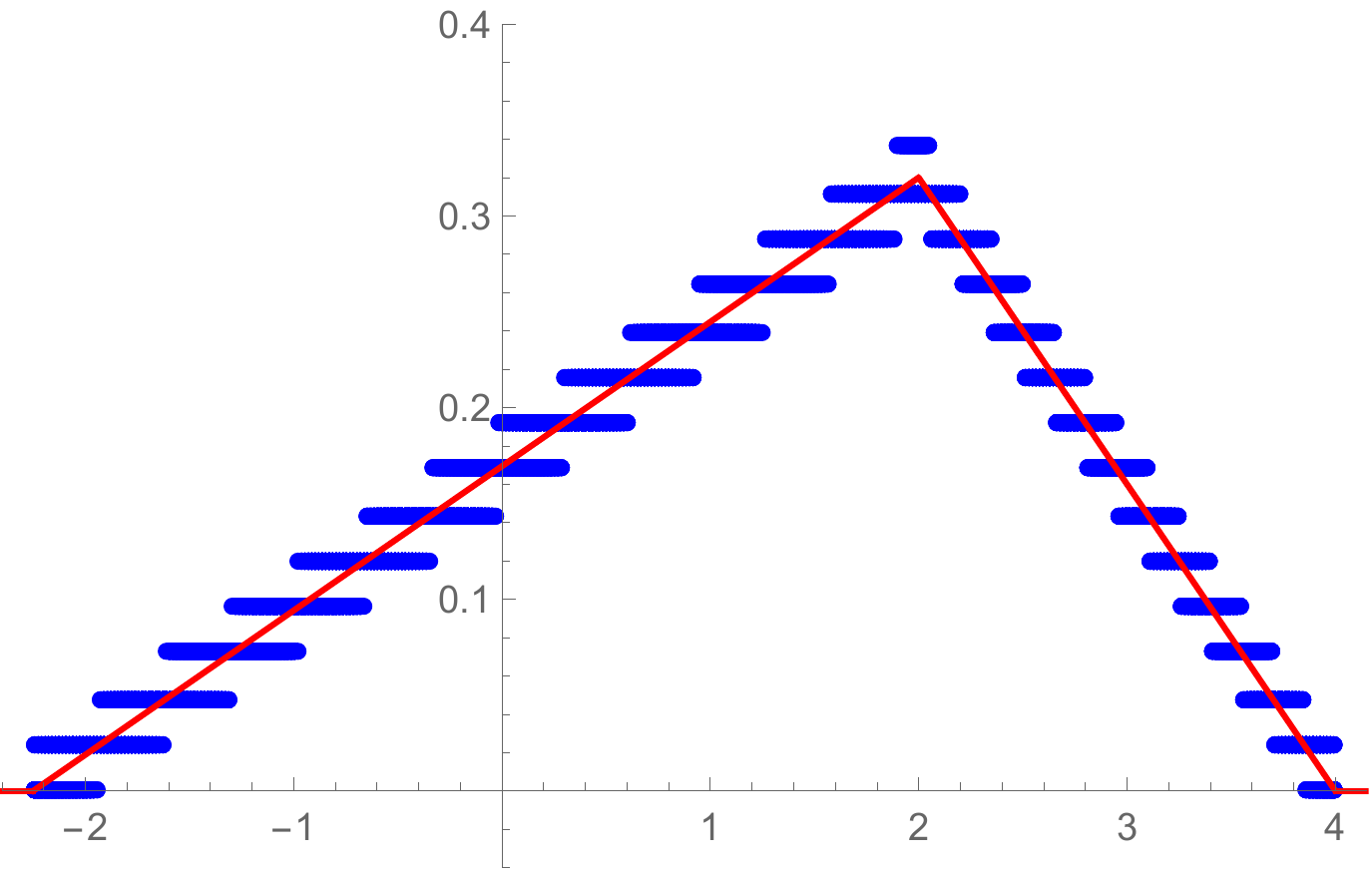}
	                \caption{$n=5000$}
	        \end{subfigure}
	        \quad
		\begin{subfigure}[b]{0.475\textwidth}
	                \centering
	                \includegraphics[width=\textwidth]{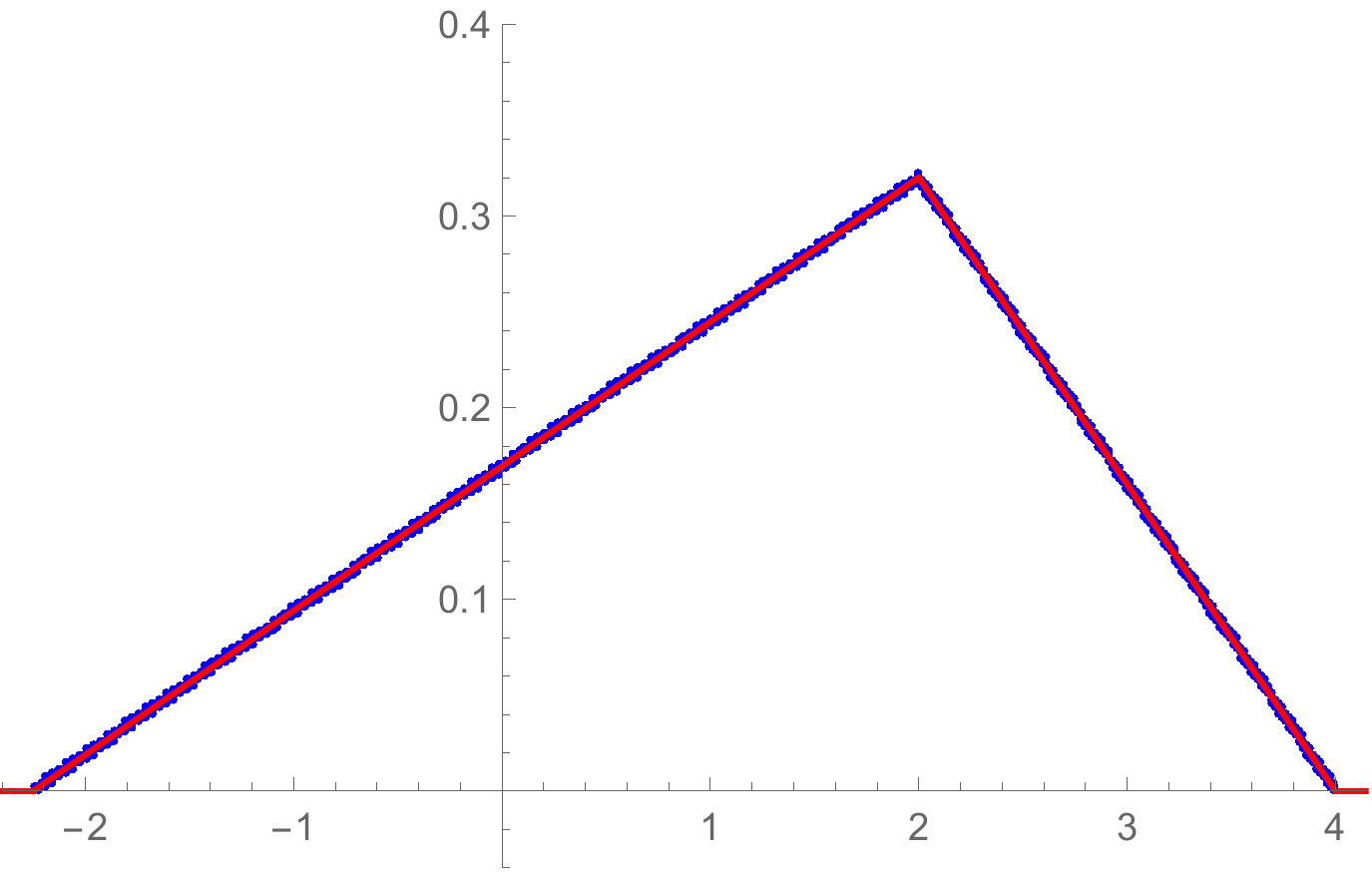}
	                \caption{$n=50000$}
	        \end{subfigure}

\caption{Scaled histograms of $\Lambda\multi{n}$ with  $\vec{m}=(-9,20,6)$ and $\vec{n}=(4,5,3)$.}
\label{Figure:negex}
\end{figure}

\begin{example}
The error bound in Theorem \ref{Theorem:Main} and the definition of the scaled histogram involve the quantity $d=\gcd(m_2n_3-m_3n_2, m_1n_3-m_3n_1, m_1n_2-m_2n_1)$. For $d=1$, the scaled histogram of $\Lambda\multi{n}$ approximately coincides with the plot of $F(x)$ at each point. 
For $d\neq 1$, Lemma \ref{Lemma:whenEmpty} says that there is a $c = c_n$ such that $\ZZ(m,n)$ is empty unless
$m\equiv c \pmod{d}$. 
If $\ZZ(m,n)$ is nonempty, Lemma \ref{Lemma:LineCount} implies that its cardinality is $d$ times larger than what we would expect for $d=1$. This is accounted for in the definition of the scaled histogram so that $d-1$ out of every $d$ points of the scaled histogram of $\Lambda\multi{n}$ are $0$, but the remaining points approximately lie on the plot of $F(x)$;
see Figure \ref{Figure:gaps}.

\begin{figure}
\centering
		\begin{subfigure}[b]{0.475\textwidth}
	                \centering
	                \includegraphics[width=\textwidth]{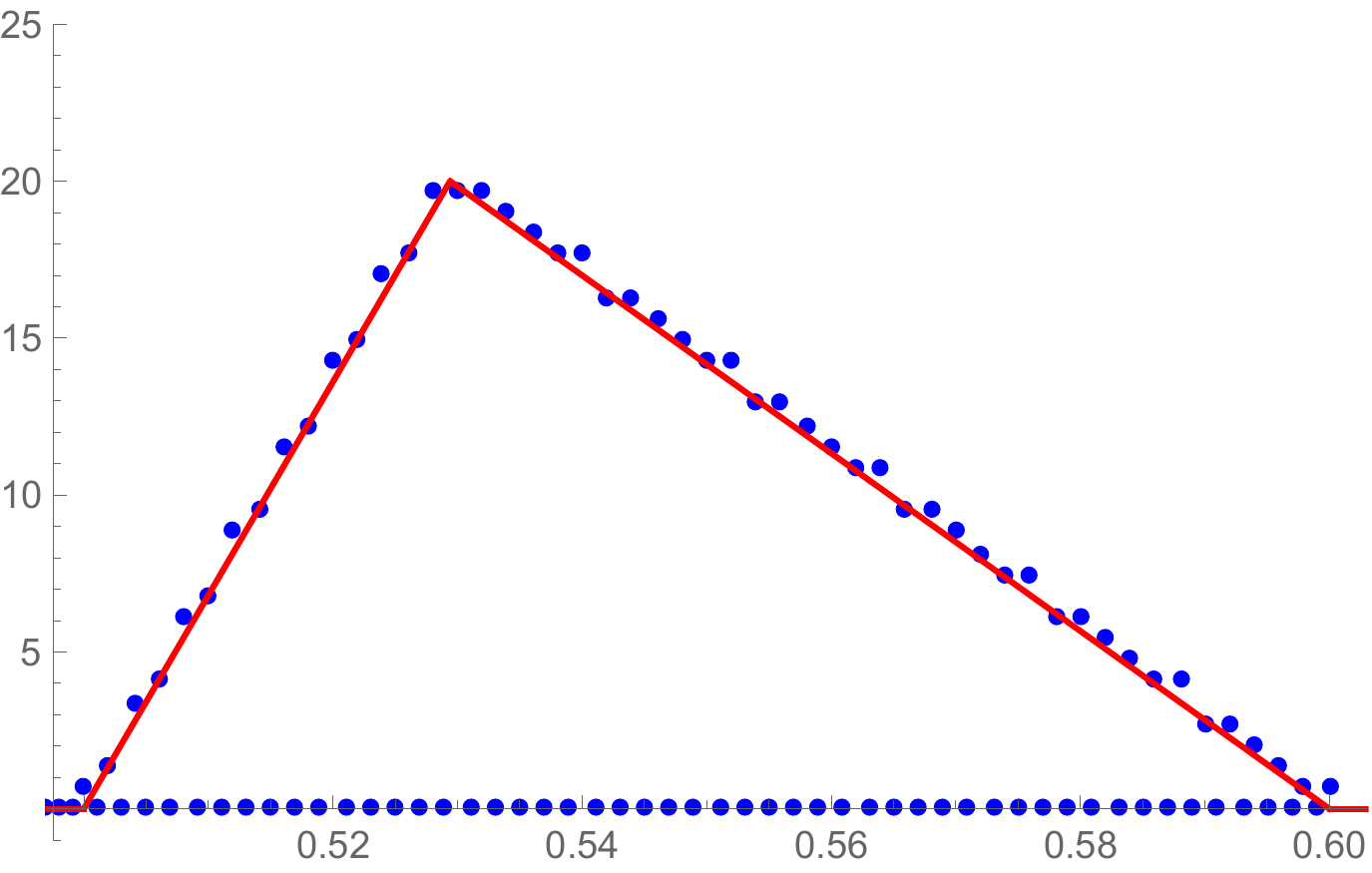}
	                \caption{$n=1000$}
	        \end{subfigure}
	        \quad
		\begin{subfigure}[b]{0.475\textwidth}
	                \centering
	                \includegraphics[width=\textwidth]{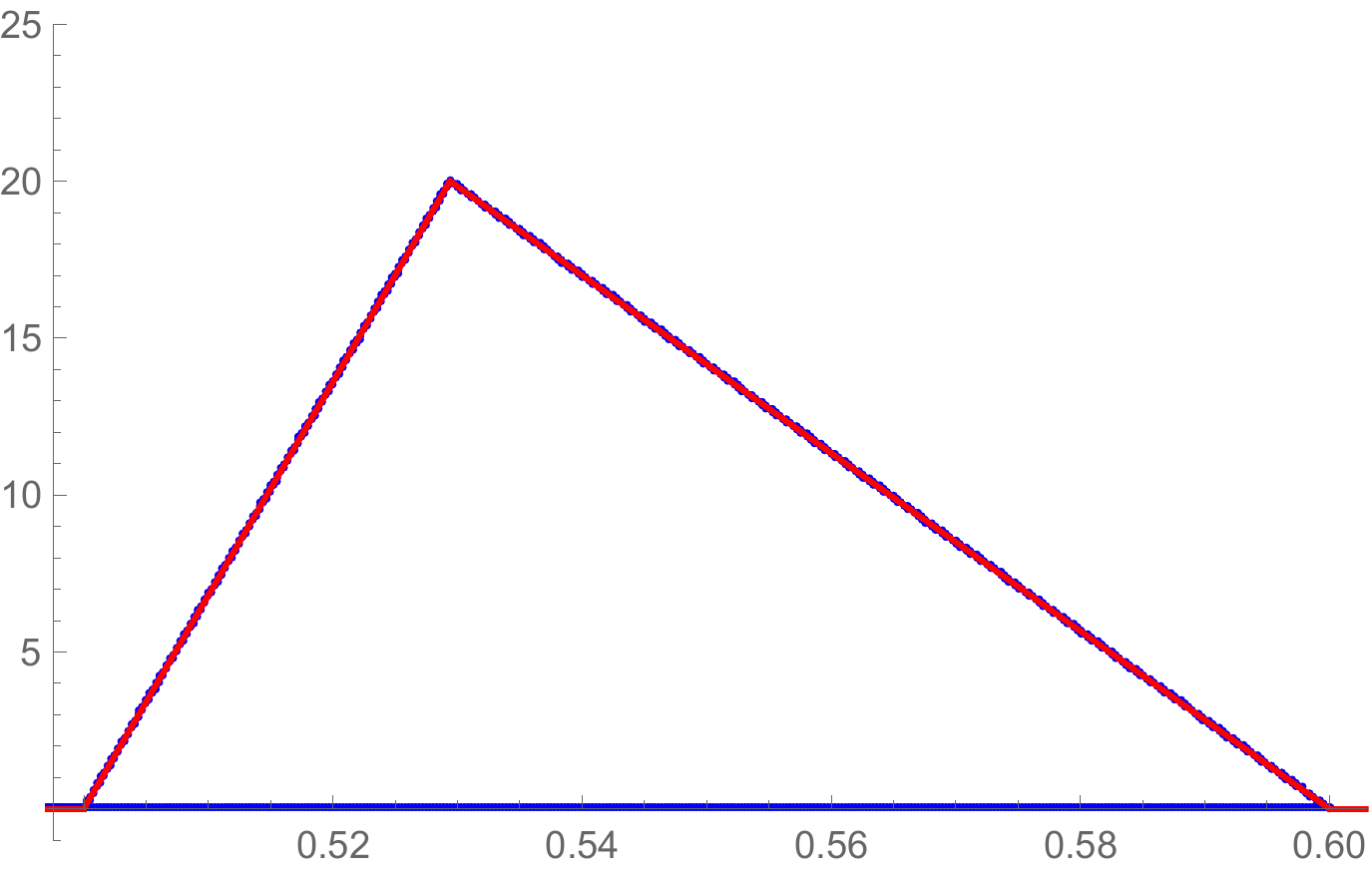} 
	                \caption{$n=30000$}\label{Figure:gapsfilled}
	        \end{subfigure}

\caption{Scaled histograms of $\Lambda\multi{n}$ with $\vec{n}=(5,17,8)$ and $\vec{m}=(3,9,4)$; here $d=2$.}
\label{Figure:gaps}
\end{figure}

\end{example}


\begin{example}
The proof of Theorem \ref{Theorem:Main} defines
$\rho_1=m_2n_3-m_3n_2$ and $\rho_3=m_1n_2-m_2n_1$. 
Although these are denominators in the formula for $F$, we permit one of them to be $0$.
Figure \ref{Figure:isolate} illustrates the case $\vec{n}=(6,9,20)$ and $\vec{m}=(1,0,0)$, for which $\rho_1=0$. 
Here $\lambda(\vec{x}) = x_1$ is the number of $6$s in the factorization $6x_1 + 9x_2 + 30x_3 = n$.
Since $\rho_1=0$, the ``left side'' of the triangle is degenerate.
\end{example}

\begin{figure}[h]
\centering
		\begin{subfigure}[b]{0.475\textwidth}
	                \centering
	                \includegraphics[width=\textwidth]{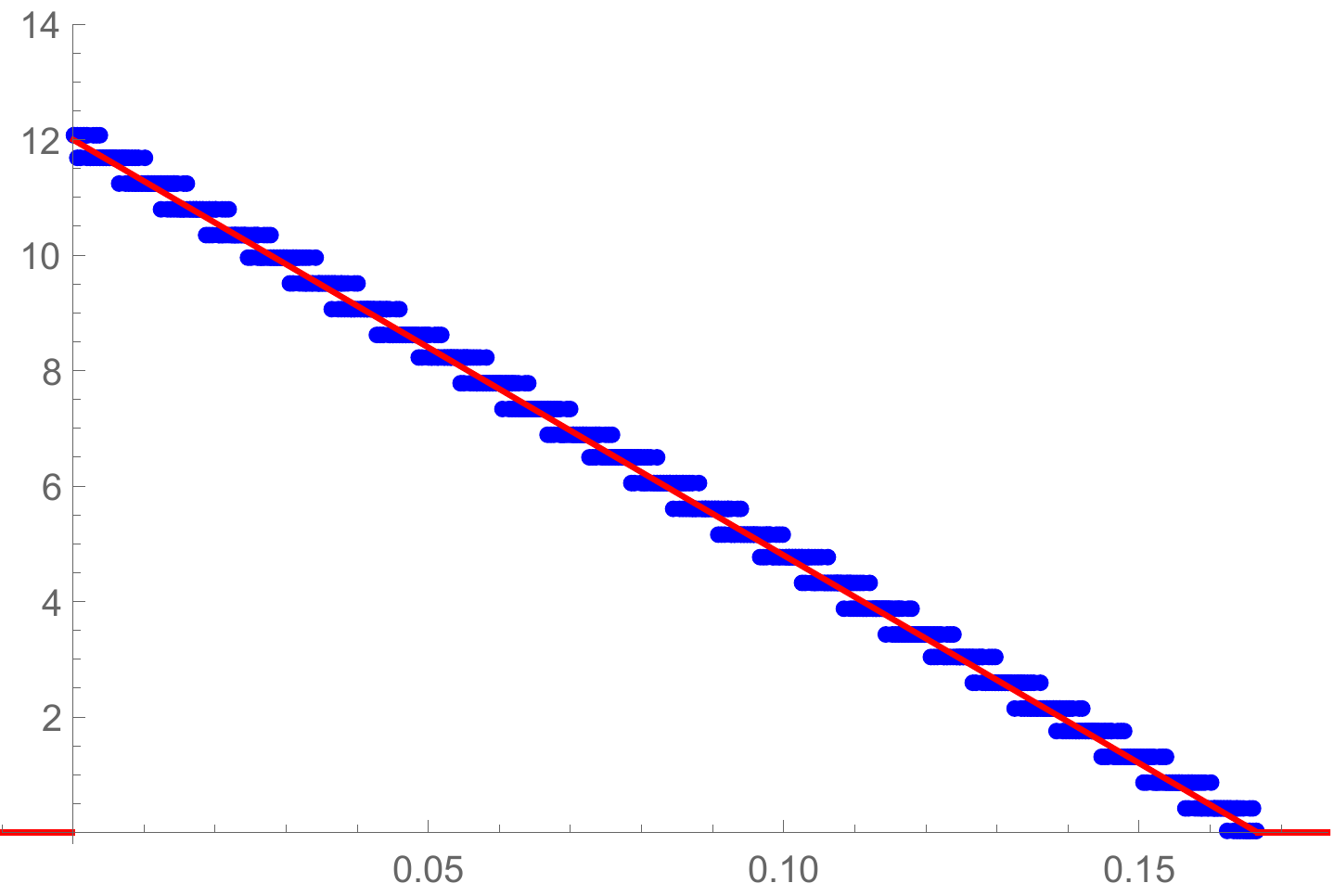}
	                \caption{$n=5000$}
	        \end{subfigure}
	        \quad
		\begin{subfigure}[b]{0.475\textwidth}
	                \centering
	                \includegraphics[width=\textwidth]{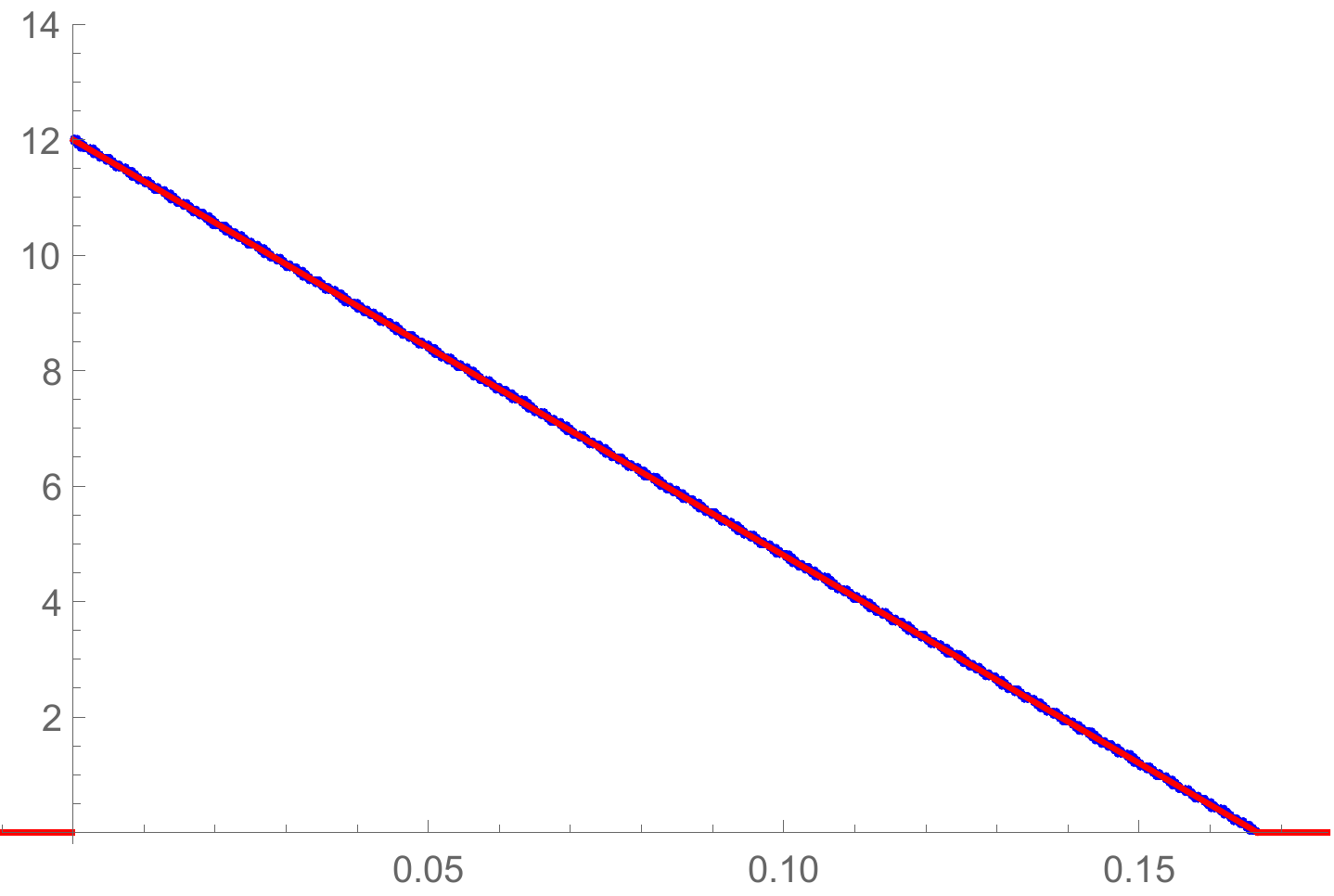}
	                \caption{$n=50000$}
	        \end{subfigure}

\caption{Scaled histograms of $\Lambda\multi{n}$ with $\vec{n}=(6,9,20)$ and $\vec{m}=(1,0,0)$.}
\label{Figure:isolate}
\end{figure}


Theorem \ref{Theorem:Main} concerns large-$n$ asymptotic behavior.
On the other hand, Proposition~\ref{Proposition:supersymmetric} below 
identifies a curious exact phenomenon even for small $n$.  We first illustrate this with an example.

\begin{example}
Let $\vec{m}_1=(2,3,1)$, $\vec{n}_1=(2,6,3)$, $\vec{m}_2=(3,1,2)$, and $\vec{n}_2=(3,2,6)$;
note that $\vec{n}_1$ and $\vec{n}_2$ generate the same semigroup.
Figure~\ref{Figure:supersymmetric} shows the scaled histograms of the multisets $\Lambda_1\multi{n}$ and
$\Lambda_2\multi{n}$ corresponding to $\vec{m}_1,\vec{n}_1$ and to $\vec{m}_2,\vec{n}_2$, respectively.
The histograms are the same up to a horizontal translation.
To be specific, there is an $r$, which depends depends only upon $n$, such that the multiplicity of $x$ in $\Lambda_1\multi{n}$ equals 
the multiplicity of $x+r$ in $\Lambda_2\multi{n}$. In Figure~\ref{Figure:supersymmetric}, we have $n=75$ and $r=2$. 
Observe that the probability density $F$ depends only upon $m_1/n_1$, $m_2/n_2$, and $m_3/n_2$,
so Theorem~\ref{Theorem:Main} predicts the same asymptotic distribution for $\Lambda_1\multi{n}$ and $\Lambda_2\multi{n}$ because 
\begin{equation*}
\frac{m_1}{n_1}=\frac{2}{2}=\frac{m_1'}{n_1'}=\frac{3}{3}=1,
\qquad
\frac{m_2}{n_2}=\frac{3}{6}=\frac{m_2'}{n_2'}=\frac{1}{2},
\quad \text{and} \quad
\frac{m_3}{n_3}=\frac{1}{3}=\frac{m_3'}{n_3'}=\frac{2}{6},
\end{equation*}
However, this only implies that $\Lambda_1\multi{n}$ and $\Lambda_2\multi{n}$ should appear similar for large~$n$,
not that they should be translations of each other.
\end{example}

\begin{figure}
\centering
		\begin{subfigure}[b]{0.475\textwidth}
	                \centering
	                \includegraphics[width=\textwidth]{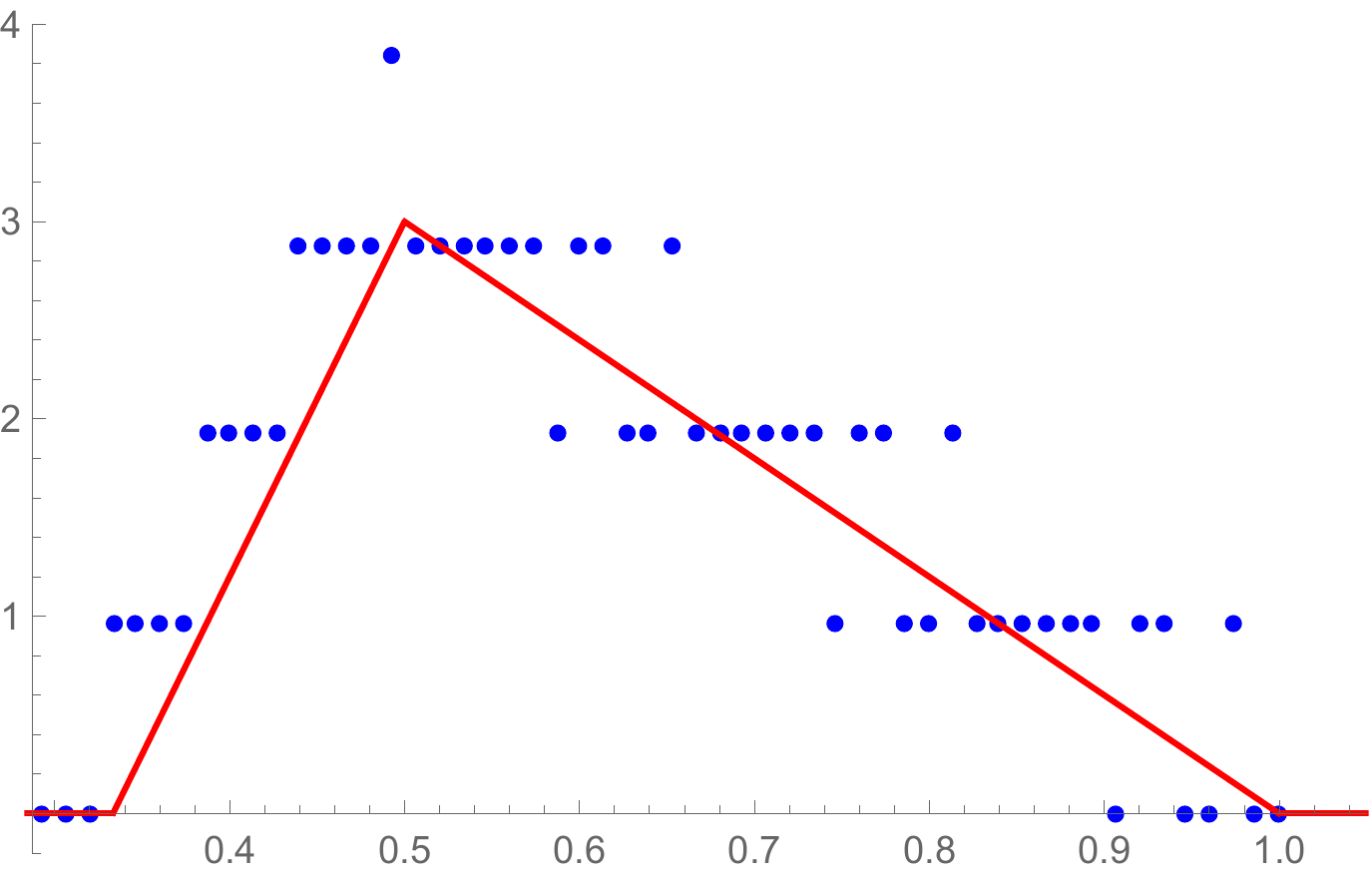}
	                \caption{$\vec{m}_1=(2,3,1)$, $\vec{n}_2=(2,6,3)$}
	        \end{subfigure}
	        \quad
		\begin{subfigure}[b]{0.475\textwidth}
	                \centering
	                \includegraphics[width=\textwidth]{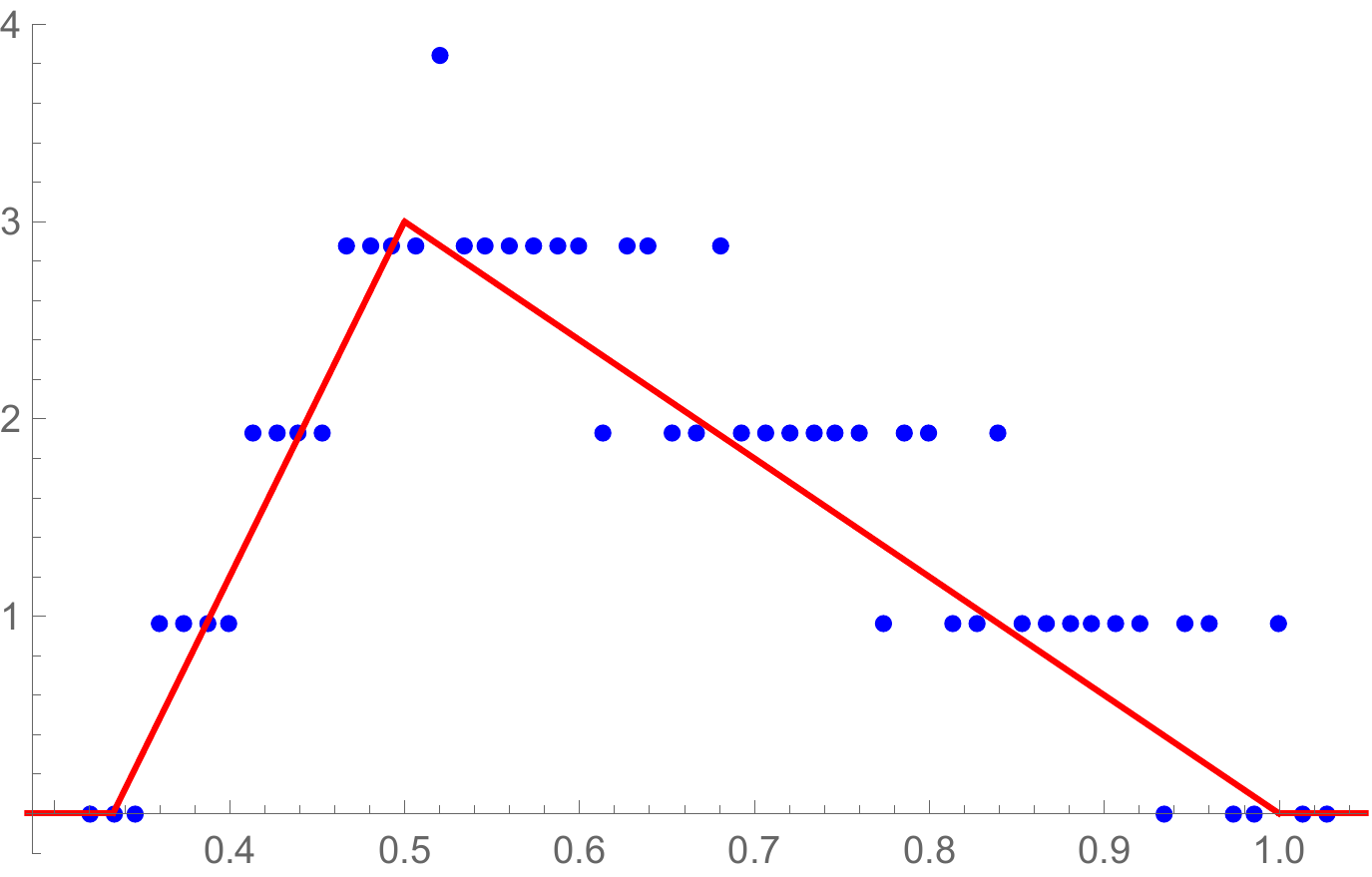} 
	                \caption{$\vec{m}_2=(3,1,2)$ and $\vec{n}_2=(3,2,6)$}
	        \end{subfigure}
\caption{Different values of $\vec{m}$ and $\vec{n}$ can produce scaled histograms that are translations of each other.
In the context of Proposition~\ref{Proposition:supersymmetric}, $(a,b,c)=(1,2,3)$.}
\label{Figure:supersymmetric}
\end{figure}

Proposition~\ref{Proposition:supersymmetric} says that two different weighted lengths on the same numerical semigroup yield nearly the same (translated) statistical behavior.  This is consistent with Theorem~\ref{Theorem:Main}
since
\begin{equation*}
\left\{\frac{a c}{a},\frac{a b}{b}, \frac{b c}{c}\right\}=\left\{a,b,c\right\}= \left\{\frac{a c}{c},\frac{a b}{a}, \frac{b c}{b}\right\},
\end{equation*}
so the asymptotic distribution functions in the two cases are equal.  
The numerical semigroups in Proposition~\ref{Proposition:supersymmetric} are called \emph{supersymmetric}~\cite{bryant2013maximal}.

\begin{proposition}\label{Proposition:supersymmetric}
Let $a,b,c\in \Z_{>0}$ be distinct, and let $\vec{m}_1=(b, a, c)$, $\vec{m}_2=(a, c, b)$, and $\vec{n}=(a b, a c, b c)$. Define
\begin{equation*}
A_1=\begin{bmatrix}\vec{m}_1^{\T} \\[3pt] \vec{n}^{\T} \end{bmatrix}
\in \M_{2\times 3}(\Z)
\qquad \text{and} \qquad
A_2=\begin{bmatrix}\vec{m}_2^{\T} \\[3pt] \vec{n}^{\T} \end{bmatrix}
\in \M_{2\times 3}(\Z),
\end{equation*}
along with
\begin{equation*}
\ZZ_1(m,n)=\left\{\vec{x}\in \Z_{\geq 0}^3:A_1\vec{x}=\begin{bmatrix}m\\n\end{bmatrix}\right\}
\quad \text{and} \quad
\ZZ_2(m,n)=\left\{\vec{x}\in \Z_{\geq 0}^3:A_2\vec{x}=\begin{bmatrix}m\\n\end{bmatrix}\right\}.
\end{equation*}
For all $n \in \Z_{\geq 0}$, there exists an $r_n \in \Z$ such that for all $m \in \Z$
\begin{equation*}
|\ZZ_1(m,n)|=|\ZZ_2(m+r_n,n)|.
\end{equation*}
Moreover, $r_n=r_{n+a b c}$ for all $n \in \Z_{\geq 0}$.
\end{proposition}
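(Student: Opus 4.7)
The plan is to set up an explicit bijective parametrization of $\ZZZ_S(n)$ in which both $\lambda_1$ and $\lambda_2$ become the \emph{same} linear function of the new parameters, up to an additive constant that depends only on $n \pmod{abc}$. The key insight driving this is the supersymmetric identity $\vec{n}=\vec{m}_1\odot\vec{m}_2$ (componentwise product), which forces a cyclic symmetry into the problem. First assume $a,b,c$ are pairwise coprime. For $n\in S$, reducing $ab\,x_1+ac\,x_2+bc\,x_3=n$ modulo $c$, $b$, $a$ respectively pins each $x_i$ to a fixed residue class: there are unique $\xi_1\in\{0,\ldots,c-1\}$, $\xi_2\in\{0,\ldots,b-1\}$, $\xi_3\in\{0,\ldots,a-1\}$ with $ab\xi_1\equiv n\pmod{c}$, $ac\xi_2\equiv n\pmod{b}$, $bc\xi_3\equiv n\pmod{a}$. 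Writing $x_1=\xi_1+c\alpha_1$, $x_2=\xi_2+b\alpha_2$, $x_3=\xi_3+a\alpha_3$ and substituting, I get that $\ZZZ_S(n)$ is in explicit bijection with the simplex $\Delta_N=\{\vec{\alpha}\in\Z_{\geq 0}^3:\alpha_1+\alpha_2+\alpha_3=N\}$, where $N=(n-ab\xi_1-ac\xi_2-bc\xi_3)/(abc)$ is a non-negative integer.

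A direct calculation in these coordinates yields
\[
\lambda_1(\vec{x})=C_1+bc\alpha_1+ab\alpha_2+ac\alpha_3 \quad\text{and}\quad \lambda_2(\vec{x})=C_2+ac\alpha_1+bc\alpha_2+ab\alpha_3,
\]
where $C_1=b\xi_1+a\xi_2+c\xi_3$ and $C_2=a\xi_1+c\xi_2+b\xi_3$. The coefficient triples $(bc,ab,ac)$ and $(ac,bc,ab)$ are both cyclic permutations of $\vec{n}=(ab,ac,bc)$, and the relabeling $\sigma:(\alpha_1,\alpha_2,\alpha_3)\mapsto(\alpha_2,\alpha_3,\alpha_1)$ is a bijection $\Delta_N\to\Delta_N$ satisfying $(bc\alpha_1+ab\alpha_2+ac\alpha_3)\circ\sigma = ac\alpha_1+bc\alpha_2+ab\alpha_3$. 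Hence the multisets $\{\lambda_1(\vec{x})-C_1\}$ and $\{\lambda_2(\vec{x})-C_2\}$ over $\vec{x}\in\ZZZ_S(n)$ coincide, so $\Lambda_1\multi{n}$ and $\Lambda_2\multi{n}$ differ by the constant shift
\[
r_n=C_2-C_1=(a-b)\xi_1+(c-a)\xi_2+(b-c)\xi_3,
\]
proving $|\ZZ_1(m,n)|=|\ZZ_2(m+r_n,n)|$ for all $m\in\Z$. For periodicity, each $\xi_i$ depends only on $n$ modulo $c$, $b$, or $a$ respectively, and each of these divides $abc$; therefore replacing $n$ by $n+abc$ fixes all three $\xi_i$, which gives $r_{n+abc}=r_n$.

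The main obstacle I expect is the case when $a,b,c$ share common factors: the congruences $ab\,x_1\equiv n\pmod{c}$ etc.\ no longer have unique solutions modulo $c$, and the parametrization must use quotients like $c/\gcd(ab,c)$, so the resulting $\alpha$-region is no longer an equally-weighted simplex. Nevertheless, the essential cyclic symmetry between the coefficient vectors $(bc,ab,ac)$ and $(ac,bc,ab)$---both permutations of $\vec{n}$---is intrinsic to the problem and should survive any bijective parametrization of $\ZZZ_S(n)$ by lattice points in an $\sigma$-equivariant region. With more careful bookkeeping of the congruences (or by first reducing to the coprime case via an appropriate rescaling), the same computation should still produce $r_n=C_2-C_1$ and the periodicity $r_{n+abc}=r_n$.
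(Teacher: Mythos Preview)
Your proof is correct and takes essentially the same approach as the paper's: both parametrize $\ZZZ_S(n)$ as a translated simplex and then exploit the cyclic identity $(z_1c,z_2b,z_3a)\cdot\vec{m}_1=(z_3c,z_1b,z_2a)\cdot\vec{m}_2$ to match the two weighted lengths up to the shift $r_n=(\vec{m}_2-\vec{m}_1)\cdot\vec{\xi}$. The paper obtains the simplex parametrization by citing the Ap\'ery-set structure of supersymmetric semigroups (a result that likewise requires $a,b,c$ pairwise coprime, so your flagged gap is shared), whereas you derive it directly from congruences; your $(\xi_1,\xi_2,\xi_3)$ is exactly the paper's unique factorization of the Ap\'ery representative $r$, and your $N$ is their $q$.
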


\begin{proof}
Let $S = \<ab, ac, bc\>$ and fix $n \in S$.  We can write $n = qabc + r$ with $r \in S$ and $r - abc \notin S$.  By~\cite[Prop.~1, Thm.~12]{garcia2013affine}, we have $|\mathsf Z_S(r)| = 1$,
\begin{equation*}
\mathsf Z_S(qabc) = \{(z_1c, z_2b, z_3a) \in \Z_{\geq 0}^3 : z_1 + z_2 + z_3 = q\},
\end{equation*}
and
\begin{equation*}
\mathsf Z_S(n) = \mathsf Z_S(qabc) + \mathsf Z_S(r).
\end{equation*}
For any $z_1, z_2, z_3 \in \Z_{\geq 0}$ with $z_1 + z_2 + z_3 = q$, we have
\begin{equation*}
(z_1c, z_2b, z_3a) \cdot \vec m_1 = z_1bc + z_2ab + z_3ac = (z_3c, z_1b, z_2a) \cdot \vec m_2,
\end{equation*}
which implies $|\ZZ_1(m, qabc)|=|\ZZ_2(m, qabc)|$ for all $m \in \Z$.  Writing $\mathsf Z_S(r) = \{\vec x\}$, linearity then implies 
\begin{equation*}
\ZZ_1(m + (\vec m_1 \cdot \vec x), n) = \ZZ_1(m, qabc) + \vec x
\,\, \text{and} \,\,
\ZZ_2(m + (\vec m_2 \cdot \vec x), n) = \ZZ_2(m, qabc) + \vec x
\end{equation*}
for all $m \in \Z$.  This yields the desired claim upon letting $r_n = (\vec m_2 - \vec m_1) \cdot \vec x$.  
\end{proof}


\section{Proof of Theorem \ref{Theorem:Main}}\label{Section:Proof}
The proof of Theorem~\ref{Theorem:Main} is geometric:
the limiting distribution arises from the projection of a simplex with one vertex on each axis,
with each vertical value in the distribution being the volume of a cross section.  
This yields a piecewise-polynomial function; the transition between each 
polynomial piece occurs when the cross section contains a vertex.
Making this general and precise, with explicit error bounds, adds to the complexity of the argument.

\subsection{Setup}
Let $\vec{m}=(m_1,m_2,m_3)$, $\vec{n}=(n_1,n_2,n_3)$, and 
\begin{equation*}
A =
\begin{bmatrix}
m_1 & m_2 & m_3 \\
n_1 & n_2 & n_3 \\
\end{bmatrix}
= \twovector{\vec{m}^{\T}}{ \vec{n}^{\T}} 
\in \M_{2\times 3}(\Z).
\end{equation*}
The hypotheses on the ratios $m_i/n_i$ imply that
\begin{equation}\label{eq:Rho}
 \underbrace{m_2 n_3 - m_3 n_2}_{\rho_1} \geq 0,
\qquad
 \underbrace{m_1 n_3 - m_3 n_1}_{\rho_2} > 0,
\quad\text{and}\quad
\underbrace{m_1 n_2 - m_2 n_1}_{\rho_3} \geq 0.
\end{equation}
Observe that
\begin{equation*}
\rho_1 = 0 \quad \iff \quad \frac{m_3}{n_3}=\frac{m_2}{n_2}
\qquad \text{and} \qquad
\rho_3 = 0 \quad \iff \quad \frac{m_2}{n_2}=\frac{m_1}{n_1},
\end{equation*}
so at most one of $\rho_1,\rho_3$ can be zero, since otherwise $\rho_2= 0$ and $m_3/n_3=m_2/n_2=m_1/n_1$. 
Treat the corresponding interval $[\frac{m_3}{n_3},\frac{m_2}{n_2}]$ or $[\frac{m_2}{n_2},\frac{m_1}{n_1}]$ 
 as degenerate in these cases.  This also means that at least two of the three inequalities in \eqref{eq:Rho} are strict.

The one-dimensional subspace $\{\vec{m}\}^{\perp} \cap \{\vec{n}\}^{\perp}$ of $\R^3$ is spanned by 
\begin{equation*}
\vec{r}= \vec{m} \times \vec{n} = 
\begin{bmatrix}
m_2 n_3 - m_3 n_2 \\
m_3 n_1 - m_1 n_3   \\
m_1 n_2 - m_2 n_1 
\end{bmatrix}
= 
\begin{bmatrix}
\rho_1 \\  -\rho_2 \\ \rho_3
\end{bmatrix}
\in \Z^3 \backslash\{ \vec{0}\}.
\end{equation*}
By construction, $A \vec{r} = \vec{0}$.
Define $\lambda(\vec{x})$ and $\Lambda\multi{n}$ as in the statement of Theorem \ref{Theorem:Main} and note that $\lambda(\vec{x})=\vec{m}\cdot \vec{x}$. 

\subsection{The sets $\ZZ(\vec{b})$ and $\widetilde{\ZZ}(\vec{b})$}
We adjust the notation \eqref{eq:ZZb} to permit vector arguments: for $\vec{b} =(m,n) \in \Z^2$, let
\begin{equation}
\ZZ(\vec{b}) 
= \{\vec{x}\in \Z_{\geq 0}^3: A \vec{x} = \vec{b} \} 
= \{ \vec{x} \in \ZZZ_S(n) : \lambda(\vec{x})=m\}.
\end{equation}
Similarly, define
\begin{equation*}
    \widetilde{\ZZ}(\vec{b})=\{\vec{x}\in \Z^3: A \vec{x} = \vec{b} \}.
\end{equation*}
We may denote these as $\ZZ(m,n)$ and $\widetilde{\ZZ}(m,n)$, respectively, as convenient.
Both $\ZZ(\vec{b})$ and $\widetilde{\ZZ}(\vec{b})$ may be empty; the following lemma 
gives some crucial insight on when $\widetilde{\ZZ}(\vec{b})$ is empty.
Although the lemma is a special case of \cite[Thm.~3.2b]{MR4142067}, we provide 
another proof since the three-dimensional setting permits the use of the cross product and geometric reasoning to simplify the argument.

\begin{lemma}\label{Lemma:whenEmpty}
Let $d=\gcd(\rho_1,\rho_2,\rho_3)$. 
For each $n \in \Z$, there is some $c \in \{0,1,2,\ldots,d-1\}$ such that $\widetilde{\ZZ}(m,n)\neq\varnothing$ 
if and only if $m\equiv c \pmod{d}$.
\end{lemma}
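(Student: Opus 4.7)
The plan is to rephrase the question in terms of an image lattice: $\widetilde{\ZZ}(m,n)$ is non-empty precisely when $(m,n)^{\T}\in A(\Z^3)$, so for each fixed $n$ I want to describe the set of $m$ for which $(m,n)^{\T}$ lies in this sublattice of $\Z^2$. Since $\gcd(n_1,n_2,n_3)=1$, I can always find some $\vec{x}_0\in\Z^3$ with $\vec{n}\cdot\vec{x}_0=n$; write $m_0=\vec{m}\cdot\vec{x}_0$. Then the full set of achievable $m$ is the coset $m_0+E$, where
\begin{equation*}
E = \{\vec{m}\cdot\vec{y}:\vec{y}\in\Z^3,\ \vec{n}\cdot\vec{y}=0\}
\end{equation*}
is a subgroup of $\Z$, say $E=e\Z$. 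It suffices to prove $e=d$; the required $c$ is then the residue of $m_0$ modulo $d$.

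For the inclusion $e\mid d$, I will exhibit the three explicit kernel vectors $(0,n_3,-n_2)$, $(-n_3,0,n_1)$, $(n_2,-n_1,0)$, whose inner products with $\vec{m}$ are $\rho_1$, $-\rho_2$, $\rho_3$ respectively. Each of these lies in $E$, so $e$ divides each of $\rho_1,\rho_2,\rho_3$, and therefore $e\mid d$.

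For the reverse inclusion $d\mid e$, the key tool is the triple cross product identity
\begin{equation*}
(\vec{m}\times\vec{n})\times\vec{y} = (\vec{m}\cdot\vec{y})\,\vec{n} - (\vec{n}\cdot\vec{y})\,\vec{m},
\end{equation*}
which, for any $\vec{y}\in\Z^3$ with $\vec{n}\cdot\vec{y}=0$, reduces to $\vec{r}\times\vec{y}=(\vec{m}\cdot\vec{y})\vec{n}$. Every entry of $\vec{r}=(\rho_1,-\rho_2,\rho_3)$ is divisible by $d$, hence so is every entry on the left-hand side; therefore $d\mid(\vec{m}\cdot\vec{y})\,n_i$ for $i=1,2,3$. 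Because $\gcd(n_1,n_2,n_3)=1$, this descends to $d\mid\vec{m}\cdot\vec{y}$, as required. The one delicate step---and the only real obstacle---is spotting this identity: it is what translates the divisibility information carried by $\vec{r}$ (that all its entries are multiples of $d$) into divisibility of the scalar $\vec{m}\cdot\vec{y}$, with primitivity of $\vec{n}$ as the closing move.
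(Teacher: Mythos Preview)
Your proof is correct and takes a somewhat different route from the paper's. You recast the problem as computing the subgroup $E=\{\vec{m}\cdot\vec{y}:\vec{n}\cdot\vec{y}=0,\ \vec{y}\in\Z^3\}$ and showing $E=d\Z$. The paper instead treats the two implications separately: for the forward direction it verifies the congruences $m_i(\vec{n}\cdot\vec{x})\equiv n_i(\vec{m}\cdot\vec{x})\pmod{d}$ directly and extracts an explicit formula for $c$ via a B\'ezout combination on $n_1,n_2,n_3$; for the reverse direction it constructs an element of $\widetilde{\ZZ}(m,n)$ by hand, using a vector $\vec{w}=\vec{n}\times\vec{v}$ (with $\vec{r}\cdot\vec{v}=d$ via B\'ezout) for which the scalar triple product gives $\vec{m}\cdot\vec{w}=d$ and $\vec{n}\cdot\vec{w}=0$. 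Both arguments hinge on cross-product identities and on $\gcd(n_1,n_2,n_3)=1$, but they distribute the work differently: the paper uses the scalar triple product to exhibit $d\in E$, while you get $d\in E$ cheaply from the three ``obvious'' kernel vectors and instead invoke the vector triple product $(\vec{m}\times\vec{n})\times\vec{y}=(\vec{m}\cdot\vec{y})\vec{n}-(\vec{n}\cdot\vec{y})\vec{m}$ to establish $E\subseteq d\Z$. Your version is a bit more conceptual and symmetric; the paper's is more explicitly constructive, producing both $c$ and a preimage in $\widetilde{\ZZ}(m,n)$.
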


\begin{proof}
The definition of $d$ ensures that, $\rho_i \equiv 0 \pmod{d}$ for $i=1,2,3$.  Thus,
\begin{equation*}
m_in_j \equiv m_jn_i \pmod{d}
\end{equation*}
for $i,j=1,2,3$.
For any $\vec{x}=(x_1,x_2,x_3)\in \Z^3$ and $i\in \{1,2,3\}$, it follows that
\begin{equation*}
m_i(n_1x_1+n_2x_2+n_3x_3) \equiv n_i(m_1x_1+m_2x_2+m_3x_3) \pmod{d};
\end{equation*}
that is,
\begin{equation*}
m_i(\vec{n}\cdot \vec{x}) \equiv n_i(\vec{m}\cdot \vec{x}) \pmod{d}.
\end{equation*}

\medskip\noindent$(\Rightarrow)$
Suppose there is an $\vec{x}\in \widetilde{\ZZ}(m,n)$.
Since $\gcd(n_1,n_2,n_3)=1$, B\'ezout's identity provides
$a_1,a_2,a_3\in \Z$ such that $a_1n_1+a_2n_2+a_3n_3 =1$. 
Let $c$ denote the least nonnegative residue of $(a_1m_1+a_2m_2+a_3m_3)n$ modulo $d$.  
Then
\begin{align*}
m 
&= \vec{m}\cdot \vec{x} \\
&=(a_1n_1+a_2n_2+a_3n_3)(\vec{m}\cdot \vec{x})\\
&\equiv (a_1m_1+a_2m_2+a_3m_3)(\vec{n}\cdot \vec{x}) \pmod{d}\\
&\equiv (a_1m_1+a_2m_2+a_3m_3) n \pmod{d}\\
&\equiv c \pmod{d}.
\end{align*}

\noindent$(\Leftarrow)$
Since $d=\gcd(\rho_1,\rho_2,\rho_3)$,
B\'ezout's identity provides a $\vec{v}\in \Z^3$ such that
\begin{equation*}
\vec{r}\cdot \vec{v} = (\rho_1, -\rho_2, \rho_3)\cdot \vec{v} = d.
\end{equation*}
Let $\vec{w}=\vec{n}\times \vec{v}$ and observe that
\begin{equation*}
\vec{n}\cdot \vec{w}= \vec{n}\cdot (\vec{n}\times \vec{v}) = (\vec{n}\times \vec{n})\cdot \vec{v} = 0
\end{equation*}
and
\begin{equation*}
\vec{m} \cdot \vec{w} = \vec{m} \cdot (\vec{n} \times \vec{v}) = (\vec{m}\times \vec{n}) \cdot \vec{v}=\vec{r} \cdot \vec{v} = d.
\end{equation*}
Fix $n \in \Z_{\geq 0}$. 
Since $\gcd(n_1,n_2,n_3)=1$, there is a $\vec{z}=(z_1,z_2,z_3)\in \Z^3$ such that $\vec{n}\cdot \vec{z}=n$. 
Let $s=\vec{m}\cdot \vec{z}$, so that $\vec{z} \in \widetilde{\ZZ}(s,n)$.
The first half of the proof ensures that $s\equiv c \pmod{d}$. 
If $m\equiv c \pmod{d}$, then $d\mid (m-s)$ and hence
\begin{equation*}
\vec{m} \cdot \left(\vec{z}+\frac{m-s}{d}\vec{w}\right) 
= \vec{m} \cdot \vec{z} + \frac{m-s}{d} \vec{m} \cdot \vec{w}
=s + \frac{m-s}{d}d = m.
\end{equation*}
Therefore,
\begin{equation*}
\vec{z}+\frac{m-s}{d}\vec{w} \in \widetilde{\ZZ}\left(s+\frac{m-s}{d}d,n\right)=\widetilde{\ZZ}(m,n).\qedhere
\end{equation*}
\end{proof}

\begin{lemma}\label{Lemma:GetZed}
Let $\vec{b} \in \Z^2$.  If $\vec{z}\in \widetilde{\ZZ}(\vec{b})$, then $\widetilde{\ZZ}(\vec{b})=\{\vec{z}+s\vec{r}/d:s\in \Z\}$ where $d=\gcd(\rho_1,\rho_2,\rho_3)$.
\end{lemma}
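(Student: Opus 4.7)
The plan is to identify $\widetilde{\ZZ}(\vec{b})$ as an affine translate of the integer points in the kernel of $A$, and then pin down exactly which real multiples of $\vec{r}$ have integer coordinates. Since $A$ is linear, any two elements of $\widetilde{\ZZ}(\vec{b})$ differ by an element of $\ker A \cap \Z^3$, so $\widetilde{\ZZ}(\vec{b}) = \vec{z} + (\ker A \cap \Z^3)$. The setup preceding the lemma has already observed that $\ker A$, as a real subspace of $\R^3$, is the one-dimensional span of $\vec{r} = \vec{m}\times\vec{n}$. Consequently every real solution of $A\vec{x}=\vec{b}$ has the form $\vec{z} + t\vec{r}$ for some $t \in \R$, and the problem reduces to determining which $t \in \R$ satisfy $t\vec{r} \in \Z^3$.

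The core claim is therefore that $t\vec{r} \in \Z^3$ if and only if $t \in \tfrac{1}{d}\Z$. The \emph{if} direction is immediate from the definition $d = \gcd(\rho_1, \rho_2, \rho_3)$: if $t = s/d$ for some $s \in \Z$, then each coordinate $\pm t\rho_i = \pm s(\rho_i/d)$ lies in $\Z$ because $d \mid \rho_i$. For the converse, I would invoke B\'ezout's identity to produce $a_1, a_2, a_3 \in \Z$ with $a_1\rho_1 + a_2\rho_2 + a_3\rho_3 = d$; if $t\vec{r} \in \Z^3$, then $td = a_1(t\rho_1) - a_2(-t\rho_2) + a_3(t\rho_3) \in \Z$, which forces $t \in \tfrac{1}{d}\Z$. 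Parametrizing the resulting $t$-values by $s \in \Z$ yields $\widetilde{\ZZ}(\vec{b}) = \{\vec{z} + (s/d)\vec{r} : s \in \Z\}$, as claimed.

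I do not anticipate any serious obstacle: the argument is essentially the classification of a rank-one integer sublattice of $\R^3$ dressed up with the coset shift by $\vec{z}$, and both the kernel computation and the B\'ezout step are routine. The real work of the lemma is packaging this description so that the explicit step vector $\vec{r}/d$ between consecutive integer solutions along the kernel direction is available in the form needed by the subsequent lattice-point counting arguments of Section~\ref{Section:Proof}.
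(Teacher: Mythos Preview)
Your argument is correct and essentially identical to the paper's: both directions hinge on $\ker A$ being the real span of $\vec{r}$ and on $d=\gcd(\rho_1,\rho_2,\rho_3)$ to determine exactly which multiples of $\vec{r}$ lie in $\Z^3$. Your explicit appeal to B\'ezout to force $td\in\Z$ spells out what the paper compresses into the phrase ``hence $s\in\Z$ because $\gcd(\rho_1,\rho_2,\rho_3)=d$,'' but the underlying reasoning is the same.
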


\begin{proof}
Since $A \vec{r} = \vec{0}$, we have $A(\vec{z} + s \vec{r}/d) = \vec{b}$.  Additionally, $\vec{r}/d\in \Z^3$ because $d=\gcd(\rho_1,\rho_2,\rho_3)$. Therefore, $\{\vec{z}+s\vec{r}/d : s\in \Z\} \subseteq \widetilde{\ZZ}(\vec{b})$. 
Suppose that  $\vec{x}\in \widetilde{\ZZ}(\vec{b})$. Then
$A(\vec{x} - \vec{z}) = A \vec{x} - A \vec{z} = \vec{0}$, so
$\vec{x} - \vec{z} = s\vec{r}/d$ for some $s \in \R$.  
Then $s \vec{r}/d = \vec{x}-\vec{z}\in \Z^3$, and hence $s \in \Z$ because $\gcd(\rho_1,\rho_2,\rho_3) = d$.
Thus, $\widetilde{\ZZ}(\vec{b}) \subseteq \{\vec{z}+s\vec{r}/d:s\in \Z\}$.
\end{proof}

\subsection{Some geometry}
For $\vec{y} =(y_1,y_2) \in \R^2$, let $\ell(\vec{y})$ denote the length of the line segment 
\begin{equation*}
\L(\vec{y}) = \{\vec{x}\in \R_{\geq 0}^3 :  A \vec{x} = \vec{y} \}
\end{equation*}
if it is nonempty; let $\ell(\vec{y}) = 0$ otherwise. 
On occasion, we may write $\L(y_1,y_2)$ and $\ell(y_1,y_2)$ instead.
 The line $\L(\vec{y})$ is contained in the plane $\{\vec{x} \in \R^3:\vec{n}\cdot \vec{x}=y_2\}$ which, owing to the positivity of the components of $\vec{n}$, 
 has compact intersection with $\R_{\geq 0}^3$. Thus, $\ell(\vec{y})$ is finite.
Observe that for $n \in \Z_{n > 0}$,
\begin{equation}\label{eq:Scaling}
n \ell( \tfrac{x}{n},1) = \ell(x,n).
\end{equation}

\begin{lemma}\label{Lemma:LineCount}
Let $d=\gcd(\rho_1,\rho_2,\rho_3)$ and $\vec{b}\in \Z^2$.
If $\widetilde{\ZZ}(\vec{b})\neq \varnothing$, then
\begin{equation*}
\frac{d \ell(\vec{b})}{\norm{\vec{r}}}-1
\,\leq\, |\ZZ(\vec{b})|
\,\leq\, \frac{d \ell(\vec{b})}{\norm{\vec{r}}}+1 .
\end{equation*}
\end{lemma}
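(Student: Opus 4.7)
The plan is to combine Lemma \ref{Lemma:GetZed} (which describes $\widetilde{\ZZ}(\vec{b})$ as an arithmetic progression along a line) with a standard counting estimate for lattice points on a segment.

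First, pick any $\vec{z} \in \widetilde{\ZZ}(\vec{b})$. By Lemma \ref{Lemma:GetZed},
\begin{equation*}
\widetilde{\ZZ}(\vec{b}) = \{\vec{z} + s\vec{r}/d : s \in \Z\},
\end{equation*}
so $\widetilde{\ZZ}(\vec{b})$ is a one-dimensional lattice along the affine line $\{\vec{x} \in \R^3 : A\vec{x} = \vec{b}\}$, with consecutive points separated by Euclidean distance $\norm{\vec{r}/d} = \norm{\vec{r}}/d$. On the other hand, $\L(\vec{b})$ is exactly the intersection of this same affine line with the closed positive octant $\R^3_{\geq 0}$, so $\L(\vec{b})$ is a (possibly empty) line segment of length $\ell(\vec{b})$. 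Intersecting with the integer points gives
\begin{equation*}
\ZZ(\vec{b}) = \widetilde{\ZZ}(\vec{b}) \cap \R^3_{\geq 0} = \widetilde{\ZZ}(\vec{b}) \cap \L(\vec{b}),
\end{equation*}
so $|\ZZ(\vec{b})|$ is the number of terms of an arithmetic progression with step size $\norm{\vec{r}}/d$ that land in a segment of length $\ell(\vec{b})$.

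Next, I invoke the elementary fact that the number $N$ of terms of an arithmetic progression on $\R$ with common difference $s > 0$ lying in a closed interval of length $L \geq 0$ satisfies $L/s - 1 \leq N \leq L/s + 1$. Applying this with $s = \norm{\vec{r}}/d$ and $L = \ell(\vec{b})$ yields
\begin{equation*}
\frac{d\,\ell(\vec{b})}{\norm{\vec{r}}} - 1 \,\,\leq\,\, |\ZZ(\vec{b})| \,\,\leq\,\, \frac{d\,\ell(\vec{b})}{\norm{\vec{r}}} + 1,
\end{equation*}
which is the desired conclusion. The hypothesis $\widetilde{\ZZ}(\vec{b}) \neq \varnothing$ is precisely what is needed so that the progression actually exists; note that $\ZZ(\vec{b})$ itself may still be empty, in which case the lower bound is vacuous.

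The main conceptual step is recognizing that $\widetilde{\ZZ}(\vec{b})$ and $\L(\vec{b})$ live on the same affine line (the solution set of $A\vec{x} = \vec{b}$), which is what makes the one-dimensional lattice point count directly applicable. Once that geometric picture is in hand, there is no real obstacle: the rest is a one-line estimate about arithmetic progressions in intervals. The key quantitative input is that the generator $\vec{r}/d$ of the kernel lattice has Euclidean length $\norm{\vec{r}}/d$, which produces the factor of $d/\norm{\vec{r}}$ in the bound.
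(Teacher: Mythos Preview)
Your proof is correct and follows essentially the same approach as the paper's: both use Lemma~\ref{Lemma:GetZed} to parametrize $\widetilde{\ZZ}(\vec{b})$ as $\{\vec{z}+s\vec{r}/d:s\in\Z\}$, identify $\ZZ(\vec{b})$ with the lattice points of this progression lying in the segment $\L(\vec{b})$, and then bound the count via the ratio of segment length to step size. The only difference is cosmetic: the paper spells out the counting estimate explicitly via $\lfloor b\rfloor-\lceil a\rceil+1$ and the inequalities $b-1<\lfloor b\rfloor\leq b$, $-a-1\leq -\lceil a\rceil\leq -a$, whereas you package this as the ``elementary fact'' $L/s-1\leq N\leq L/s+1$ for arithmetic progressions in intervals.
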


\begin{proof}
Suppose that $\ZZ(\vec{b}) \neq \varnothing$.
Then Lemma \ref{Lemma:GetZed} provides a $\vec{z}\in \Z^3$ such that
\begin{equation*}
\ZZ(\vec{b})=\Z_{\geq 0}^3\cap \{\vec{z}+s\vec{r}/d : s\in \Z\}.
\end{equation*}
Define
\begin{equation*}
a = \inf \{s \in \R:\vec{z}+s\vec{r}/d \in \R_{\geq 0}^3\}
\qquad \text{and} \qquad
b = \sup \{s \in \R:\vec{z}+s\vec{r}/d \in \R_{\geq 0}^3\}.
\end{equation*}
Then $\vec{z}+s\vec{r}/d \in \R_{\geq 0}^3$ if and only if $s\in [a,b]$.
Consequently,
\begin{equation*}
|\ZZ(\vec{b})|
= | [a,b]  \cap \Z| 
= \lfloor b\rfloor - \lceil a \rceil +1 .
\end{equation*}
Since
\begin{equation*}
b-1<\floor*{b} \leq b 
\qquad \text{and} \qquad
-a-1\leq -\ceil*{a}\leq -a,
\end{equation*}
it follows that
\begin{equation}\label{eq:PlugHere}
b-a-1\leq | \ZZ(\vec{b}) | \leq b-a+1. 
\end{equation}
The length of $\L(\vec{b})$ is
\begin{equation*}
\ell(\vec{b})=\norm{(\vec{z}+b\vec{r}/d)-(\vec{z}+a\vec{r}/d)}=\frac{b-a}{d}\norm{\vec{r}}.
\end{equation*} 
Substitute $b-a =d\ell(\vec{b})/ \norm{ \vec{r}}$ in \eqref{eq:PlugHere} and obtain the desired inequalities.
\end{proof}

\subsection{The triangle emerges}
Recall that $f:I\to\R$ is \emph{Lipschitz} on a (possibly infinite) interval $I$ with
\emph{Lipschitz constant} $C$ if $|f(x) - f(y) | \leq C |x-y|$ for all $x,y \in I$.

\begin{lemma}\label{Lemma:Triangle}
Suppose that $\rho_1, \rho_3 \neq 0$. 
For $t \in \R$,
\begin{equation*}
\ell(t,1)
= \frac{\norm{ \vec{r} }}{\rho_2}
\begin{cases}
0 & \text{if $t < \frac{m_3}{n_3}$},\\[5pt]
\dfrac{n_3 t - m_3 }{  \rho_1 } & \text{if $\frac{m_3}{n_3} \leq t \leq \frac{m_2}{n_2}$},\\[8pt]
 \dfrac{m_1 - n_1 t }{ \rho_3  } & \text{if $\frac{m_2}{n_2} \leq t \leq \frac{m_1}{n_1}$},\\[5pt]
0 & \text{if $t > \frac{m_1}{n_1}$}.
\end{cases}
\end{equation*}
is a ``triangular'' function of $t$ with base $[ \frac{m_3}{n_3}, \frac{m_1}{n_1}]$, peak
at $t = \frac{ m_2}{n_2}$, and height
\begin{equation*}
\ell\Big(\frac{ m_2}{n_2},1\Big) = \frac{\norm{ \vec{r}} }{n_2  \rho_2}.
\end{equation*}
Furthermore, $\ell(t,1)$ is Lipschitz with Lipschitz constant
\begin{equation*}
\frac{\norm{ \vec{r}}}{\rho_2} \operatorname{max}\left\{ \frac{n_3}{\rho_1}, \frac{n_1}{\rho_3} \right\} .
\end{equation*}
\end{lemma}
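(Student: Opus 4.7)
The plan is to interpret $\L(t,1)$ geometrically as a line segment inside a fixed triangle, then read off its endpoints by Cramer's rule and compute its length using that $\vec{r}$ spans the direction of the line.

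First I would note that, since $\vec{n}$ has positive entries, the set $\Delta = \{\vec{x}\in\R_{\geq 0}^3 : \vec{n}\cdot\vec{x}=1\}$ is the triangle with vertices $\vec{v}_i = \vec{e}_i/n_i$, and $\L(t,1)=\Delta\cap\{\vec{x}:\vec{m}\cdot\vec{x}=t\}$. Since $\lambda(\vec{v}_i)=m_i/n_i$, the functional $\lambda$ attains its minimum $m_3/n_3$ at $\vec{v}_3$ and its maximum $m_1/n_1$ at $\vec{v}_1$ on $\Delta$, and the hypothesis $\rho_1,\rho_3\neq 0$ places $m_2/n_2$ strictly between them. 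The two outer cases $\ell(t,1)=0$ are then immediate. For $t\in[m_3/n_3,m_2/n_2]$ the level set meets the edges $\{x_1=0\}$ and $\{x_2=0\}$ of $\Delta$, while for $t\in[m_2/n_2,m_1/n_1]$ it meets $\{x_2=0\}$ and $\{x_3=0\}$; the edge $\{x_2=0\}$ is hit throughout the nontrivial range, and the opposite endpoint jumps between the other two edges at $t=m_2/n_2$.

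Next I would compute the endpoints in each case by Cramer's rule on the $2\times 2$ system left after fixing one coordinate to zero. To pass from endpoints to length I would exploit the setup fact that $\vec{r}=\vec{m}\times\vec{n}$ spans $\ker A$: the endpoint difference $P_2-P_1$ is a scalar multiple of $\vec{r}$, so matching a single coordinate identifies the scalar, and the length equals its absolute value times $\norm{\vec{r}}$. A short check in the first case gives $\ell(t,1)=(n_3t-m_3)\norm{\vec{r}}/(\rho_1\rho_2)$, matching the stated formula; the second case is symmetric with $(\rho_1,n_3)$ replaced by $(\rho_3,n_1)$. Evaluating at $t=m_2/n_2$ and using $n_3m_2-n_2m_3=\rho_1$ recovers the peak height $\norm{\vec{r}}/(n_2\rho_2)$.

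For the Lipschitz claim, the piecewise-linear formula shows that $\ell(\cdot,1)$ has slope $n_3\norm{\vec{r}}/(\rho_1\rho_2)$ on the rising piece, $-n_1\norm{\vec{r}}/(\rho_2\rho_3)$ on the falling piece, and $0$ outside $[m_3/n_3,m_1/n_1]$; the Lipschitz constant is then the maximum of the absolute slopes, namely $(\norm{\vec{r}}/\rho_2)\operatorname{max}\{n_3/\rho_1,\,n_1/\rho_3\}$. The main obstacle is bookkeeping: to conclude $P_2-P_1\parallel\vec{r}=(\rho_1,-\rho_2,\rho_3)$ from the Cramer's-rule endpoints one needs the identity $(m_2-n_2t)\rho_2-(m_1-n_1t)\rho_1=\rho_3(m_3-n_3t)$ (the first two coordinates of $P_2-P_1$ are parallel to $\vec{r}$ on inspection, but the third requires this cancellation). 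Once this identity is verified, the piecewise formula, peak, and Lipschitz bound all follow.
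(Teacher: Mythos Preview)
Your proposal is correct and follows essentially the same route as the paper: find the intersections of the affine line $\{A\vec{x}=(t,1)^\T\}$ with the coordinate planes (equivalently, the edges of the simplex $\Delta$), determine which pair of intersections lies in $\R_{\geq 0}^3$ for each range of $t$, and compute the distance via the direction vector $\vec{r}$. One small simplification: your ``main obstacle'' identity need not be verified directly, since $P_1$ and $P_2$ both satisfy $A\vec{x}=(t,1)^\T$ by construction, so $P_2-P_1\in\ker A=\operatorname{span}\{\vec{r}\}$ automatically and matching a single nonzero coordinate suffices.
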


\begin{proof}
If it is nonempty, the line segment $\L(t,1)$ lies in $\R^3_{\geq 0}$; its endpoints each lie on one of the coordinate planes.  
Solve the corresponding equations and obtain the points of intersection with the three coordinate planes:
\begin{itemize}[leftmargin=*]
\item $\vec{p}_1(t) = \rho_1^{-1} (0, \,\, n_3 t-m_3, \, \, m_2-n_2 t)$, hence $\vec{p}_1(t) \in \R_{\geq 0}^3 \iff t \in [ \frac{m_3}{n_3}, \frac{m_2}{n_2}]$,
\item $\vec{p}_2(t) = \rho_2^{-1} (n_3 t-m_3,\,\,  0,\,\,  m_1-n_1 t)$, hence $\vec{p}_2(t) \in \R_{\geq 0}^3 \iff t \in [ \frac{m_3}{n_3}, \frac{m_1}{n_1}]$,
\item $\vec{p}_3(t) = \rho_3^{-1} (n_2 t-m_2,\,\,  m_1-n_1 t,\,\, 0)$, hence $\vec{p}_3(t) \in \R_{\geq 0}^3 \iff t \in [ \frac{m_2}{n_2}, \frac{m_1}{n_1}]$,
\end{itemize}
since $\rho_1,\rho_2,\rho_3 \geq 0$.
In particular, if $\rho_1=0$ or $\rho_3=0$, then $\L(t,1)$ does not meet the corresponding coordinate plane in $\R_{\geq 0}^3$
(recall that $\rho_2 > 0$).  

For $t < \frac{m_3}{n_3}$ or $t > \frac{m_1}{n_1}$, we have $\ell(t,1) = 0$.
For $t \in [\frac{m_3}{n_3}, \frac{m_2}{n_2}]$, we see that $\L(t,1)$ is the line segment from
$\vec{p}_1(t)$ to $\vec{p}_2(t)$.  A computation confirms that
\begin{align*}
\ell(t,1)
&= \norm{ \vec{p}_1(t) - \vec{p}_2(t) }
= \frac{n_3  t - m_3 }{ \rho_1 \rho_2 }\norm{ \vec{r}}.
\end{align*}
For $t \in [\frac{m_2}{n_2}, \frac{m_1}{n_1}]$, we see that $\L(t,1)$ is the line segment from
$\vec{p}_2(t)$ to $\vec{p}_3(t)$, so
\begin{align*}
\ell(t,1)
&= \norm{ \vec{p}_2(t) - \vec{p}_3(t) }
= \frac{m_1 - n_1 t }{ \rho_2 \rho_3 }\norm{ \vec{r}}
\end{align*}
via another computation.
This yields the desired piecewise-linear formula for $\ell(t,1)$.
An admissible Lipschitz constant is the maximum of the slopes of $\ell(t,1)$
on $[\frac{m_3}{n_3}, \frac{m_2}{n_2}]$ and $[\frac{m_2}{n_2}, \frac{m_1}{n_1}]$,
so long as the corresponding interval is nondegenerate.
Elementary computations confirm the remainder of the lemma.
\end{proof}

\begin{remark}\label{Remark:Lipschitz}
If $\rho_1 = 0$ or $\rho_3 = 0$ (the conditions are mutually exclusive), then the corresponding interval in the definition of $\ell(t,1)$
and term in the maximum above are omitted. Moreover, $\ell(t,1)$ is Lipschitz on 
$[ \frac{m_3}{n_3}, \infty)$ or $(-\infty, \frac{m_1}{n_1}]$, respectively.
\end{remark}

Lemma \ref{Lemma:Triangle} states that $\ell(x,1)$ is a triangular function with base 
$[\frac{m_3}{n_3}, \frac{m_1}{n_1}]$ and height $\frac{\norm{\vec r}}{n_2\rho_2}$. 
Since the base width is
\begin{equation*}
\frac{m_1}{n_1}-\frac{m_3}{n_3}=\frac{m_1n_3-n_1m_3}{n_1n_3}=\frac{\rho_2}{n_1n_3},
\end{equation*}
the area of the triangle is
\begin{equation*}
\int_{\R}\ell(x,1)\,dx 
= \frac{1}{2}\cdot\frac{\rho_2}{n_1n_3}\cdot \frac{\norm{\vec r}}{n_2 \rho_2}
=\frac{\norm{\vec{r}} }{2n_1n_2n_3}.
\end{equation*}
In particular,
\begin{equation}\label{eq:Fell}
F(t) = 2 n_1 n_2 n_3 \frac{\ell(t,1)}{\norm{ \vec{r}}}
\end{equation}
is the probability density from Theorem \ref{Theorem:Main}.

\subsection{A technical lemma}
The next lemma permits us to approximate a discrete sum by an integral 
with a completely explicit error estimate.  

\begin{lemma}\label{Lemma:Convergence}
Suppose that
\begin{enumerate}
\item $g:\R\to\R$ satisfies $|g(x)|\leq C_1$ for $x \in \R$;
\item $g$ is Lipschitz on some closed interval $I$ with Lipschitz constant $C_2$;
\item $n ,c, d\in \Z_{\geq 0}$ and $c<d$;
\item $f:\Z\to \Z$ satisfies $$\displaystyle \bigg|f(c+kd)/d- n g\bigg(\frac{c+kd}{n} \bigg)\bigg| \leq 1$$ for $k\in \Z$; and
\item $f(x)=0$ for $x\not \equiv c \pmod{d}$.
\end{enumerate}
Then for real $\alpha < \beta$ such that $[\alpha, \beta]\subseteq I$,
\begin{equation*}
\left| \frac{1}{n^2}\sum_{k \in \Z\cap [\alpha n, \beta n]}f(k )  - \int_{\alpha}^{\beta} g(x)\,dx \right|
\leq \frac{(\beta-\alpha +\frac{2d}{n})(1+dC_2)+d(5C_1+\frac{1}{n})}{n}.
\end{equation*}
\end{lemma}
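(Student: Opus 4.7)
The plan is to decompose the quantity inside the absolute value into three sources of error: restriction of the sum to an arithmetic progression, pointwise replacement of $f/d$ by $ng$, and Riemann sum approximation of the resulting discrete sum by the integral.

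First, I would use hypothesis (5) to collapse the sum to the arithmetic progression of relevant points. Setting $x_k = (c+kd)/n$ and $K = \{k \in \Z : x_k \in [\alpha,\beta]\}$, hypothesis (5) gives
\begin{equation*}
\sum_{j \in \Z \cap [\alpha n, \beta n]} f(j) = \sum_{k \in K} f(c+kd),
\end{equation*}
and $K$ is an interval of integers of size at most $(\beta-\alpha)n/d + 1$. Next, hypothesis (4) in the form $|f(c+kd) - nd\, g(x_k)| \leq d$, summed over $K$ and divided by $n^2$, yields
\begin{equation*}
\left|\frac{1}{n^2}\sum_{j \in \Z \cap [\alpha n, \beta n]} f(j) - \frac{d}{n}\sum_{k \in K} g(x_k)\right| \leq \frac{d|K|}{n^2} \leq \frac{\beta - \alpha}{n} + \frac{d}{n^2}.
\end{equation*}

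The heart of the argument is estimating $\bigl|\tfrac{d}{n}\sum_{k \in K} g(x_k) - \int_{\alpha}^{\beta} g(x)\,dx\bigr|$. The sample points $x_k$ lie in $[\alpha,\beta] \subseteq I$ with spacing $d/n$, so the Lipschitz bound from hypothesis (2) applies. On each interval $[x_k, x_{k+1}]$ between consecutive sample points I have the trapezoidal-style estimate $\bigl|\int_{x_k}^{x_{k+1}} g - \tfrac{d}{n} g(x_k)\bigr| \leq C_2 d^2/(2n^2)$; summing over $k \in K \setminus \{k_{\max}\}$ produces a Riemann error of at most $(\beta - \alpha) C_2 d/(2n)$. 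I then need three boundary adjustments, each controlled by hypothesis (1): the leftover sample-point contribution $\tfrac{d}{n} g(x_{k_{\max}})$ contributes at most $C_1 d/n$, and each of the gap integrals $\int_\alpha^{x_{k_{\min}}} g$ and $\int_{x_{k_{\max}}}^\beta g$ (over intervals of length $< d/n$) contributes at most $C_1 d/n$.

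Combining, the total error is bounded by $(\beta-\alpha)/n + d/n^2 + (\beta-\alpha) C_2 d/(2n) + 3 C_1 d/n$. The main obstacle is not mathematical but entirely bookkeeping: I must verify that this bound is dominated by the slightly looser but cleanly packaged target $((\beta-\alpha + 2d/n)(1 + dC_2) + d(5C_1 + 1/n))/n$. Expanding the target term by term shows that it exceeds my bound with comfortable slack in the $C_1$ and $C_2$ coefficients, which is a routine check. One point requiring care throughout is hygiene about where each hypothesis is invoked: the uniform bound $C_1$ is used only for boundary terms and unpaired sample values, while the Lipschitz constant $C_2$ is only ever used between consecutive sample points inside $I$, so the restriction $[\alpha,\beta] \subseteq I$ is not overstepped.
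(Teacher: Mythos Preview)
Your proposal is correct, and in fact slightly more efficient than the paper's argument. The paper introduces an auxiliary step function $G(x)=\tfrac{n}{d}f(c+\lfloor nx/d\rfloor d)$, rewrites the discrete sum as an integral of $G/n^2$, and then compares $G/n^2$ with $g$ pointwise via the Lipschitz bound, juggling several floor/ceiling endpoints along the way. You instead proceed entirely in the discrete world: reduce to the arithmetic progression via (5), swap $f$ for $ndg$ via (4), and then treat $\tfrac{d}{n}\sum_{k\in K}g(x_k)$ as a left-endpoint Riemann sum for $\int_\alpha^\beta g$, handling the two edge gaps and the unpaired final sample with the uniform bound $C_1$. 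Your route is cleaner and yields a sharper raw bound (coefficient $3$ on $C_1 d/n$ and $\tfrac12$ on $(\beta-\alpha)C_2 d/n$), which then has to be relaxed to match the statement; the paper's auxiliary-function framing, on the other hand, makes the analogy with convergence of measures more transparent and generalizes more readily to the modular variants the authors mention. Two small points you should make explicit when you write it out: the case $K=\varnothing$ (which forces $\beta-\alpha<d/n$ and is absorbed by the $C_1$ terms), and the observation that $[x_k,x_{k+1}]\subseteq[\alpha,\beta]\subseteq I$ so the Lipschitz estimate is legitimate on each subinterval.
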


\begin{proof}
Since the proof is somewhat long, we break it up into several pieces.

\medskip\noindent\textbf{An auxiliary function.}
Let $G(x)=\frac{n}{d}f(c+\floor*{nx/d} d)$. Then
\begin{align}
\int_{\frac{d}{n}\ceil{ \frac{\alpha n-c}{d} }}^{\frac{d}{n}\floor{ \frac{\beta n-c}{d}+1}}G(x)\,dx 
&=\int_{\ceil{ \frac{\alpha n-c}{d} }}^{\floor{ \frac{\beta n-c}{d}+1}}\frac{d}{n}G\Big(\frac{d}{n} u\Big)\,du \nonumber\\
&=\int_{\ceil{ \frac{\alpha n-c}{d} }}^{\floor{ \frac{\beta n-c}{d}+1}} f(c+\floor{u} d) \, du \nonumber\\
&= \sum_{k \in \Z\cap [\frac{\alpha n-c}{d}, \frac{\beta n-c}{d}]}f(c+ kd) \nonumber\\
&=\sum_{k\in \Z\cap [\alpha n,\beta n]} f(k) .\label{eq:G2f}
\end{align}
For $k\in \Z$, condition (d) ensures that
\begin{align}
    \abs{\frac{G(kd /n)}{n^{2}}-g\Big(\frac{c+kd}{n}\Big)}
    &=\abs{\frac{\frac{n}{d}f\big(c+\big\lfloor n\frac{k d}{nd} \big\rfloor d\big)}{n^{2}}- g\left(\frac{c+kd}{n}\right)} \nonumber\\
    &=\abs{\frac{f(c+kd )/d - n g(\frac{c+kd}{n} )}{n}} \nonumber\\
    &\leq \frac{1}{n}. \label{eq:Final1n}
\end{align}
We also need a bound afforded by (a) and (d):
\begin{equation}\label{eq:gC1}
\bigg|\frac{G(x)}{n^2} \bigg|= \frac{ | f(c+\floor{nx/d} d) | }{dn}\leq \frac{1}{n}+\left|g \bigg(\frac{c+\floor{nx/d} d}{n} \bigg)\right| \leq 
C_1+\frac{1}{n}.
\end{equation}

\medskip\noindent\textbf{A Lipschitz estimate.}
Observe that $G(x)=G(\floor{nx/d}d/n)$ and
\begin{equation*}
\frac{-d}{n} \,\leq\, \frac{c+nx-d}{n}-x \,\leq\, \frac{c+\floor*{nx/d}d}{n}-x  \,\leq\, \frac{c+nx}{n}-x \,\leq\, \frac{d}{n}.
\end{equation*}
If $x$ and $(c+\floor*{nx/d}d)/n$ are both in $I$, condition (b) and \eqref{eq:Final1n} imply that
\begin{align}
     \left|\frac{G(x)}{n^2}-g(x) \right|
     &\leq \left| \frac{G(\floor*{nx/d} d/n)}{n^2} -g\left(\frac{c+\floor*{nx/d} d}{n}\right) \right| \nonumber\\
     &\qquad\qquad +\left| g\left(\frac{c+\floor*{nx/d} d}{n}\right)-g(x)\right| \nonumber \\
     &\leq \frac{1+dC_2}{n}.\label{eq:FirstG}
\end{align}

\medskip\noindent\textbf{A containment.} 
We claim $x$ and $(c+\floor{nx/d}d)/n$ belong to $[\alpha , \beta] \subset I$ whenever
\begin{equation*}
\frac{d}{n}\ceil*{\frac{\alpha n-c}{d}+1} \, \leq \,  x \, \leq \,  \frac{d}{n}\floor*{\frac{\beta n-c}{d}}.
\end{equation*}
Suppose that the inequality above holds.  Then
\begin{equation*}
x\geq \frac{d}{n}\left(\frac{\alpha n -c}{d}+1\right)=\alpha+\frac{d-c}{n}  \geq \alpha
\end{equation*}
and
\begin{equation*}
x\leq \frac{d}{n}\floor*{\frac{\beta n-c}{d}} \leq \frac{d}{n}\cdot \frac{\beta n-c}{d} = \beta - \frac{c}{n} \leq  \beta.
\end{equation*}
Next observe that
\begin{equation*}
\frac{c+\floor{\frac{n}{d}x}d}{n}\geq \frac{c+\floor{\frac{n}{d} ( \frac{d}{n}\ceil{\frac{\alpha n-c}{d}+1} ) }d}{n} 
= \frac{c+\ceil{\frac{\alpha n-c}{d}+1}d}{n} \geq  \frac{c+\alpha n -c+d}{n} \geq  \alpha
\end{equation*}
and
\begin{equation*}
\frac{c+\floor{\frac{n}{d}x}d}{n} \leq \frac{c+\floor{\frac{n}{d}  (  \frac{d}{n}\floor{\frac{\beta n-c}{d}}  )  }d}{n} 
= \frac{c+\floor{\frac{\beta n-c}{d}}d}{n} \leq \frac{c+\beta n -c}{n}=\beta.
\end{equation*}
This completes the proof of the claim.

\medskip\noindent\textbf{An observation.}
Since
\begin{equation*}
\frac{d}{n}\floor*{\frac{\beta n -c}{d}}-\frac{d}{n}\ceil*{\frac{\alpha n - c}{d}+1}\leq \beta -\frac{c}{n}-\left(\alpha+\frac{d-c}{n}\right)=\beta-\alpha-\frac{d}{n}
\end{equation*}
and
\begin{equation*}
\frac{d}{n}\ceil*{\frac{\alpha n - c}{d}+1}-\frac{d}{n}\floor*{\frac{\beta n - c}{d}}<\alpha +\frac{2d-c}{n}-\left(\beta-\frac{c}{n}\right)=\alpha-\beta +\frac{2d}{n},
\end{equation*}
we conclude that
\begin{equation}\label{eq:MaxLength}
\Bigg| \frac{d}{n}\floor*{\frac{\beta n-c}{d}} -  \frac{d}{n}\ceil*{\frac{\alpha n-c}{d}+1} \Bigg|
\leq \operatorname{max}\left\{\beta-\alpha-\frac{d}{n} , \, \alpha-\beta+\frac{2d}{n}\right\}.
\end{equation}

\medskip\noindent\textbf{Small intervals.} 
Consider the intervals $\big[\alpha, \frac{d}{n} \ceil{ \frac{\alpha n - c}{d} + 1}\big]$ and
$\big[ \frac{d}{n} \floor{ \frac{\beta n - c}{d} } , \beta \big]$.
Since
\begin{align*}
0 &<\frac{d-c}{n} = \frac{d}{n}\left(\frac{\alpha n-c}{d}+1\right) - \alpha \\
&\leq \frac{d}{n}\ceil*{\frac{\alpha n - c}{d}+1} - \alpha 
< \frac{d}{n}\left(\frac{\alpha n -c}{d}+2\right)-\alpha \\
&=\frac{2d-c}{n}, 
\end{align*}
the first interval is nonempty with length at most $(2d-c)/n$.
Similarly,
\begin{align*}
0
&\leq \frac{c}{n} = \beta - \frac{d}{n} \left(\frac{\beta n - c}{d} \right)
\leq \beta - \frac{d}{n} \floor*{ \frac{\beta n - c}{d} } \\
&<\beta - \frac{d}{n}\left(  \frac{\beta n - c}{d}-1 \right)  
= \frac{c+d}{n},
\end{align*}
so the second interval has length at most $(c+d)/n$.
In summary,
\begin{equation}\label{eq:cndn}
0 < \frac{d}{n}\ceil*{\frac{\alpha n -c}{d}+1}-\alpha < \frac{2d-c}{n}
\quad \text{and} \quad
0 \leq  \beta -\frac{d}{n}\floor*{\frac{\beta n -c}{d}} < \frac{c+d}{n}.
\end{equation}

\medskip\noindent\textbf{Conclusion.}
We conclude that
\begin{align*}
    &\Bigg| \frac{1}{n^2} \sum_{k \in \Z\cap [\alpha n, \beta n]}f(k )  - \int_{\alpha}^{\beta} g(x)\,dx \Bigg|  \\
    &=\left| \int_{ \frac{d}{n}\ceil{\frac{\alpha n-c}{d}}}^{ \frac{d}{n}\floor{\frac{\beta n-c}{d}+1}} \frac{G(x)}{n^{2}}\,dx - \int_{\alpha}^{\beta} g(x)\,dx \right| && \text{by \eqref{eq:G2f}}\\
    &=\Bigg| 
     \int_{\frac{d}{n}\ceil{\frac{\alpha n-c}{d}}}^{\frac{d}{n}\ceil{\frac{\alpha n-c}{d}+1}}\frac{G(x)}{n^2}\,dx 
     + \int_{ \frac{d}{n}\ceil{\frac{\alpha n-c}{d}+1}}^{ \frac{d}{n}\floor{\frac{\beta n-c}{d}}} \frac{G(x)}{n^{2}} \,dx \\
     &\hspace{1in}
     +\int_{\frac{d}{n}\floor{\frac{\beta n-c}{d}}}^{\frac{d}{n}\floor{\frac{\beta n-c}{d}+1}}\frac{G(x)}{n^2}\,dx
    -\int_{\alpha}^{\beta}g(x)\,dx \Bigg|\\
    &\leq
    \int_{\frac{d}{n}\ceil{\frac{\alpha n-c}{d}}}^{\frac{d}{n}\ceil{\frac{\alpha n-c}{d}+1}}\left|\frac{G(x)}{n^2}\right|\,dx
    +\int_{\frac{d}{n}\floor{\frac{\beta n-c}{d}}}^{\frac{d}{n}\floor{\frac{\beta n-c}{d}+1}}\left|\frac{G(x)}{n^2}\right|\,dx \\
    &\hspace{1in}
    +    \left| \int_{ \frac{d}{n}\ceil{\frac{\alpha n-c}{d}+1}}^{ \frac{d}{n}\floor{\frac{\beta n-c}{d}}} \frac{G(x)}{n^{2}} \,dx 
    -\int_{\alpha}^{\beta}g(x)\,dx \right|   \\
    &\leq \bigg| \int_{ \frac{d}{n}\ceil{\frac{\alpha n-c}{d}+1} }^{ \frac{d}{n}\floor{\frac{\beta n-c}{d}}} \left(\frac{G(x)}{n^{2}}-g(x) \right)\,dx  
    -\int_{\alpha}^{\frac{d}{n}\ceil{\frac{\alpha n-c}{d}+1} }g(x)\,dx  \\
    &\hspace{1in}-\int_{\frac{d}{n}\floor{\frac{\beta n-c}{d}} }^{\beta}g(x)\,dx \bigg| +\frac{2d}{n}\left(C_1+\frac{1}{n}\right) && \text{by \eqref{eq:gC1}}\\
    &\leq \left|\int_{ \frac{d}{n}\ceil{\frac{\alpha n-c}{d}+1} }^{ \frac{d}{n}\floor{\frac{\beta n-c}{d}}} \left|\frac{G(x)}{n^{2}}-g(x) \right|\,dx\right| 
      +\int_{\alpha}^{\frac{d}{n}\ceil{\frac{\alpha n-c}{d}+1} }|g(x)|\,dx \\
    &\hspace{1in}+\int_{\frac{d}{n}\floor{\frac{\beta n-c}{d}} }^{\beta}|g(x)|\,dx 
       +\frac{2d}{n}\left(C_1+\frac{1}{n}\right)\\
    &\leq 
      \Bigg| \frac{d}{n}\floor*{\frac{\beta n-c}{d}} -  \frac{d}{n}\ceil*{\frac{\alpha n-c}{d}+1} \Bigg| \bigg(\frac{1+dC_2}{n}\bigg) 
            && \text{by \eqref{eq:FirstG}}\\
     &\hspace{0.5in} +\frac{(2d-c)C_1}{n}
     +\frac{(c+d)C_1}{n} 
     +\frac{2d}{n}\left(C_1+\frac{1}{n}\right)  && \text{by (a), \eqref{eq:cndn}} \\
     &\leq \operatorname{max}\left\{\beta-\alpha-\frac{d}{n} , \, \alpha-\beta+\frac{2d}{n}\right\}
     \left(\frac{1+dC_2}{n}\right)      && \text{by \eqref{eq:MaxLength}}\\
 &\hspace{1in}+\frac{3dC_1}{n} +\frac{2d}{n}\left(C_1+\frac{1}{n}\right) \\
     &\leq\frac{(\beta-\alpha+\frac{2d}{n} )(1+dC_2)+d(5C_1+\frac{2}{n})}{n}.\qedhere
\end{align*}
\end{proof}

\subsection{A simplification}\label{Subsection:Simple}
If $\vec{x} = (x_1,x_2,x_3)$ and $\lambda( \vec{x}) \in \Lambda\multi{n}$, then
\begin{align*}
\lambda(\vec{x}) 
&\geq m_1 x_1 + m_2 x_2 + m_3 x_3
= \frac{m_1}{n_1} n_1 x_1 + \frac{m_2}{n_2} n_2 x_2 + \frac{m_3}{n_3}n_3 x_3 \\
&> \frac{m_3}{n_3}(n_1 x_1 + n_2 x_2 + n_3 x_3) = \frac{m_3}{n_3}n.
\end{align*}
Thus, $\Lambda\multi{n}\cap [-\infty,\tfrac{m_3}{n_3}n] = \varnothing$ and, in a similar manner, 
$\Lambda\multi{n}\cap [\tfrac{m_1}{n_1}n,\infty]=\varnothing$.
Since $F$ is supported on $[\frac{m_3}{n_3},\frac{m_1}{n_1}]$,
we may assume that $[\alpha,\beta]\in [\frac{m_3}{n_3},\frac{m_1}{n_1}]$. 
In particular, we can assume that the function $\ell(x,1)$ of Lemma \ref{Lemma:Triangle} is Lipschitz on $[\alpha,\beta]$.

\subsection{Conclusion}
We now conclude the proof of Theorem \ref{Theorem:Main}. 
Fix $n \in \Z_{\geq 0}$ and let 
\begin{equation*}
f(m) = |\ZZ(m,n)| = \{ \vec{x} \in \ZZZ_S(n) : \lambda(\vec{x}) = m\}
\qquad \text{and} \qquad 
g(x)= \frac{ \ell(x,1) }{ \norm{ \vec{r} } }.
\end{equation*}
Let $d=\gcd(\rho_1,\rho_2,\rho_3)$ and deduce from \eqref{eq:Scaling} and Lemmas \ref{Lemma:whenEmpty} and \ref{Lemma:LineCount} 
that there is an $c \in \{0,1,\ldots,d-1\}$ such that
\begin{align*}
\left| \frac{f(c+kd)}{d}-ng\left(\frac{c+kd}{n}\right)\right|
&=\left| \frac{|\ZZ(c+kd,n)|}{d} -  \frac{n\ell(\frac{c+kd}{n},1)}{\norm{\vec{r}}}  \right| \\
&=\left| \frac{|\ZZ(c+kd,n)|}{d}-\frac{\ell(c+kd ,n)}{\norm{\vec{r}}} \right| \\
&\leq 1
\end{align*}
for all $k\in \Z$; moreover, $f(x)=0$ if $x\not \equiv c \pmod{d}$.

Suppose that $\rho_1,\rho_3 \neq 0$. Apply Lemma \ref{Lemma:Convergence} 
to the functions $f$ and $g$ and the parameters $c,d,n$ defined above, and to 
the constants
\begin{equation*}
C_1 = \frac{1}{n_2  \rho_2}
\qquad \text{and} \qquad
C_2 = \frac{1}{\rho_2} \operatorname{max}\left\{ \frac{n_3}{\rho_1}, \frac{n_1}{\rho_3} \right\}
\end{equation*}
provided by Lemma \ref{Lemma:Triangle}:
\begin{equation*}
\left| \frac{1}{n^2}\sum_{m \in \Z\cap [\alpha n, \beta n]}f(m)  - \int_{\alpha}^{\beta} g(x)\,dx \right|
\leq \frac{(\beta-\alpha +\frac{2d}{n})(1+dC_2)+d(5C_1+\frac{2}{n})}{n}.
\end{equation*}
Since
\begin{equation*}
\sum_{m \in \Z\cap [\alpha n, \beta n]} f(m)
=\big| \Lambda \multi{n} \cap  [\alpha n ,\beta n] \big| 
\end{equation*} 
and $\rho_1,\rho_2,\rho_3 \geq 1$, it follows that
\begin{align}
&\left |\frac{\big| \Lambda \multi{n} \cap  [\alpha n ,\beta n] \big|  }{n^2} -\int_{\alpha}^\beta \frac{\ell(x,1)}{\norm{\vec{r}}}\,dx \right | \nonumber\\
&\hspace{0.5in}\leq  \frac{(\beta-\alpha+\frac{2d}{n} )(1+dC_2)+d(5C_1+\frac{2}{n})}{n} \nonumber\\
&\hspace{0.5in} \leq \frac{(\beta-\alpha +\frac{2d}{n})(1+\frac{d}{\rho_2}\operatorname{max}\{ \frac{n_3}{\rho_1}, \frac{n_1}{\rho_3} \})+d(\frac{5}{n_2\rho_2}+\frac{2}{n})}{n} \label{eq:Error} \\
&\hspace{0.5in} \leq  \frac{(\beta-\alpha +\frac{2d}{n})(1+d\operatorname{max}\{ n_1,n_3 \})+d(\frac{5}{n_2}+\frac{2}{n})}{n} .\nonumber
\end{align}
To complete the proof of Theorem \ref{Theorem:Main} in this case, 
multiply by $2n_1n_2n_3$ and use \eqref{eq:Fell}. 
If $\rho_1=0$ or $\rho_3 = 0$,
the corresponding term in the maximum in \eqref{eq:Error} is omitted
by virtue of Remark \ref{Remark:Lipschitz} and the restriction of $[\alpha,\beta]$ in Subsection \ref{Subsection:Simple}. \qed

\begin{remark}\label{Remark:Error}
The bound implied by \eqref{eq:Error} is better, but more complicated, than the bound in Theorem \ref{Theorem:Main}.
The two bounds are compared in Table \ref{Table:errortab}.
\end{remark}


\bibliography{3-generator}
\bibliographystyle{amsplain}

\end{document}